\documentclass[11pt]{amsart}

\usepackage[utf8]{inputenc}
\usepackage{microtype}
\usepackage{amsfonts}
\usepackage{amsmath}
\usepackage{graphicx}
\usepackage{float}
\usepackage[dvipsnames]{xcolor}
\usepackage{hyperref}
\usepackage{tikz-cd}
\usepackage{comment}

\usepackage{amssymb}
\usepackage{enumitem}
\usepackage{mathrsfs}

\usepackage{tikz}
\usetikzlibrary{topaths}
\usetikzlibrary{calc}
\usetikzlibrary{positioning,shapes,shadows, decorations.markings}
\usetikzlibrary{arrows,automata}
\usepackage{xcolor}
\usepackage{tikz-cd} 
\tikzset{
  mid arrow/.style={
    postaction={decorate},
    decoration={
      markings,
      mark=at position 0.6 with {\arrow{to}}
    }
  }
}

\usepackage{a4wide}

\usepackage{empheq}

\newtheorem{theorem}{Theorem}[section]
\newtheorem*{theorem*}{Theorem}
\newtheorem{lemma}[theorem]{Lemma}
\newtheorem{proposition}[theorem]{Proposition}
\newtheorem*{proposition*}{Proposition}
\newtheorem{corollary}[theorem]{Corollary}
\newtheorem*{corollary*}{Corollary}

\newtheorem{cit}[theorem]{Citation}
\newtheorem*{conjecture*}{Conjecture}

\newtheorem*{question*}{Question}

\newtheorem{lettertheorem}{Theorem}

\newtheorem{lettercorollary}[lettertheorem]{Corollary}

\theoremstyle{definition}
\newtheorem{definition}[theorem]{Definition}
\newtheorem*{definition*}{Definition}
\newtheorem{remark}[theorem]{Remark}
\newtheorem*{remark*}{Remark}
\newtheorem{example}[theorem]{Example}

\newcommand{\N}{\mathbb{N}}
\newcommand{\Z}{\mathbb{Z}}
\newcommand{\Q}{\mathbb{Q}}
\newcommand{\R}{\mathbb{R}}

\newcommand{\AG}{\mathcal{AG}}
\newcommand{\Cantor}{\mathfrak{C}}
\newcommand{\Graph}{\mathcal{QT}}
\newcommand{\cycle}{\mathcal{C}}

\DeclareMathOperator{\Aut}{Aut}

\DeclareMathOperator{\Homeo}{Homeo}

\DeclareMathOperator{\F}{F}
\DeclareMathOperator{\Supp}{Supp}
\DeclareMathOperator{\Stab}{Stab}
\DeclareMathOperator{\Cay}{Cay}

\numberwithin{equation}{section}

\begin{document}

\title[Action graphs, semiconjugacy, and non-embedding in $V$]{Action graphs, semiconjugacy, and non-embedding in Thompson's group $V$}
\date{\today}
\subjclass[2020]{Primary 20F65;   
                 Secondary 20E07, 37C15} 

\keywords{Thompson group, semiconjugacy, Stein group, action graph, Schreier graph}

\author[J.~Hyde]{James Hyde}
\address{Department of Mathematics and Statistics, Binghamton University, Binghamton, NY}
\email{jhyde1@math.binghamton.edu}

\author[R.~Skipper]{Rachel Skipper}
\address{University of Utah, 155 S 1400 E, Salt Lake City, UT}
\email{rachel.skipper@utah.edu}

\author[M.~C.~B.~Zaremsky]{Matthew C.~B.~Zaremsky}
\address{Department of Mathematics and Statistics, University at Albany (SUNY), Albany, NY}
\email{mzaremsky@albany.edu}

\begin{abstract}
We prove a variety of results about subgroups of Thompson's group $V$. First we prove that every action graph of a finitely generated subgroup of $V$ acting on an orbit in Cantor space is quasi-isometric to a tree. Then we prove that for a broad class of groups of homeomorphisms of the real line, for example Thompson's group $F$, any action on the Cantor space via an embedding into Thompson's group $V$ must be semiconjugate to the standard action on the line. Finally, we use this to establish that many such groups cannot embed into $V$; in particular the Stein group $F_{2,3}$ cannot embed in $V$, answering a question of the third author.
\end{abstract}

\maketitle
\thispagestyle{empty}

\section*{Introduction}

Thompson's group $V$ is the group of homeomorphisms $h$ of Cantor space $\Cantor=\{0,1\}^\N$ that are locally given by prefix replacements, i.e., for all $\kappa\in \Cantor$ there exist a finite prefix $w$ of $\kappa$ and another finite string $u$ such that $(w\kappa')h=u\kappa'$ for all $\kappa'\in \Cantor$. (Throughout this paper we use right actions, and correspondingly write functions to the right of their arguments.) This group, together with its subgroup $T$, was the first known example of a finitely presented infinite simple group, and a significant amount of research has been dedicated to understanding it.

Our first main result of this paper is that every action graph of a finitely generated subgroup $G\le V$ acting on an orbit in $\Cantor$ is a quasi-tree. More precisely:

\begin{lettertheorem}\label{thrm:q_tree}
Let $G$ be a finitely generated subgroup of $V$, with $S$ a finite symmetric generating set for $G$. Let $X$ be the $G$-orbit of some $\kappa_0\in\Cantor$, and let $\Gamma$ be the action graph for $G$ acting on $X$, that is, the Schreier graph with respect to the subgroup $\Stab_G(\kappa_0)$. Then $\Gamma$ is quasi-isometric to a tree.
\end{lettertheorem}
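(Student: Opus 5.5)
We will apply Manning's bottleneck criterion: a connected graph $\Gamma$ is quasi-isometric to a tree if and only if there is $\Delta>0$ such that for any two vertices $x,y$ and any point $m$ on a geodesic from $x$ to $y$, every path from $x$ to $y$ passes within $\Delta$ of $m$. So the task is to produce a uniform $\Delta$ for the action graph $\Gamma$ on the orbit $X=\kappa_0 G$.

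\textbf{Setup.} Since $S$ is finite, there is a single depth $N$ such that every $s\in S$ acts as a prefix replacement on each cylinder $w\Cantor$ with $|w|=N$. Then each generator changes only a prefix of bounded length: if $\kappa s=\kappa'$, write $\kappa=w\eta$ with $|w|=N$. Then $\kappa'=u\eta$, with the same tail $\eta$ and $|u|\le L$ for a uniform $L$.

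\textbf{Key combinatorial picture.} Every point of $X$ shares a tail with $\kappa_0$, so we may write $\kappa=p\,\sigma^k(\kappa_0)$, where $\sigma$ is the shift. We organize vertices by which suffix of $\kappa_0$ they have ``reached''. For a vertex $\kappa$, let $d(\kappa)$ be the least $k$ such that $\kappa=p\,\sigma^k(\kappa_0)$ with $|p|$ bounded by a constant multiple of $N+L$. A generator step changes $d$ by a bounded amount, and it changes the prefix only within a bounded window at the front. This gives a coarse tree-like map: send $\kappa$ to the pair (depth into $\kappa_0$, current prefix), so that paths in $\Gamma$ project to paths in a tree $T$ of prefixes. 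Concretely, $T$ is the tree whose vertices are the finite words $p$, with the ``root direction'' given by peeling off letters of $\kappa_0$.

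\textbf{Bottleneck argument.} Let $\gamma$ be a geodesic from $x$ to $y$ and $m$ a point on it. We will show that any vertex path from $x$ to $y$ must pass through a vertex whose representation agrees with $m$ on the ``deep'' part, meaning the part beyond the window of length about $N+L$. Two points that agree beyond that window are then a bounded distance apart in $\Gamma$. To see this, note there are only finitely many configurations for the window, so there is a bounded set of words moving one to another within the orbit, whenever such moves exist at all. This is exactly the statement that $T$ coarsely separates: removing a bounded ball around the image of $m$ disconnects $x$ from $y$, because to change deep letters of a tail a path must first ``expose'' them by shortening the prefix to within the window.

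\textbf{Main obstacle.} The hardest step is making the separation rigorous. A path can change the prefix $p$ in complicated ways; its length may grow and shrink. We must show that the intermediate geodesic point $m$ corresponds to a genuine cut level: its deep part is common to $x$ and $y$ up to some depth, and every path from $x$ to $y$ must at some moment have all letters beyond a certain position in agreement with $m$. We will handle this with a ``stack'' argument. Track the longest common suffix position shared with the initial word throughout the path; generators only modify a bounded-length prefix, so this position moves in bounded steps like a stack pointer. It must pass the level of $m$, and at that moment the vertex is within bounded distance of $m$, by the finiteness argument above. The delicate point is handling the periodicity of $\kappa_0$ when it is eventually periodic. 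There, different decompositions $p\,\sigma^k(\kappa_0)$ coincide; we will reduce to choosing the canonical minimal $k$, absorbing the periodic case into the bounded constant.
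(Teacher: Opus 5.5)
There is a genuine gap, and it is in your finiteness step. The claim that two vertices of $\Gamma$ agreeing beyond a bounded window are a bounded distance apart in $\Gamma$ is false.

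Finiteness of window configurations gives only finitely many pairs $(w,w')$. For $w\eta,w'\eta\in X$, however, the element of $G$ carrying $w\eta$ to $w'\eta$ may depend on the tail $\eta$. Unless some element of $G$ acts as the prefix replacement $w\Cantor\to w'\Cantor$ on the whole cylinder, there is no uniform word, and the required word length can be unbounded.

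The Remark after the paper's proof gives a counterexample. Take $G=\langle g,h\rangle$ and $x=1^n00\cdots$. The points $x$ and $y=0x$ are adjacent in $\Graph$. Near them, $\Gamma$ consists of two rays $\{1^k00\cdots\}$ and $\{01^k00\cdots\}$, moved only by $h^{\pm1}$ since $g$ fixes both, so $d_\Gamma(x,y)$ grows linearly in $n$.

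The same example breaks your separation step. The point $m=1^{n/2}00\cdots$ lies on a $\Gamma$-geodesic from $x$ to $y$. Its image in the prefix tree, however, is at distance about $n/2$ from the tree geodesic joining $x$ and $y$, which is a single edge. So removing a bounded ball around it separates nothing. Also, $x$ and $y$ share all their deep letters, so your ``stack pointer'' has no reason to reach the level of $m$. The actual bottleneck here comes from the geometry of $\Gamma$ itself, which the prefix tree cannot see.

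Both steps implicitly treat the Lipschitz injection $V(\Gamma)\to V(\Graph)$ as a quasi-isometric embedding, and it is not. So a bottleneck in $\Graph$ does not pull back to one in $\Gamma$. Separately, $d(\kappa)$ as you define it is undefined for vertices such as $1^n\kappa_0$, which add a long prefix rather than peel letters off $\kappa_0$.

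The paper avoids comparing distances altogether. An injective Lipschitz map between bounded-degree graphs is regular, so the asymptotic dimension of $\Gamma$ is at most that of $\Graph$, namely $1$. The paper then proves that $H_1(\Gamma)$ is uniformly generated, and Fujiwara--Whyte (via Manning's bottleneck) yields the quasi-tree.

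The uniformity you were missing is obtained only along actual paths in $\Gamma$, not for arbitrary pairs of points. Take a long simple cycle and use the discrepancy $f_{\kappa_0}$ as a height function. The last crossings $\lambda_m,\rho_m$ of levels $m$ near the top determine prefix pairs $(w_m,u_m)$ of length at most $N$. By pigeonhole there are $m_1<m_2$ with equal pairs. Because the arc of the cycle from $\lambda_{m_2}$ over the top to $\rho_{m_2}$ stays above level $m_2$, the word in $S$ read along it acts as the prefix replacement $w_{m_2}\Cantor\to u_{m_2}\Cantor$. That word can therefore be replayed from $\lambda_{m_1}$ to $\rho_{m_1}$. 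Lemma~\ref{lem:conn_cpts} (bounded components of level bands) then shows this replayed path is a genuine shortcut.

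Some mechanism of this kind, extracting uniform prefix replacements from segments of paths, is what your bottleneck argument would need.
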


When $G=V$, this result is easy to see, but it is somewhat surprising that it holds for all finitely generated $G\le V$. Notably, it holds despite the inclusion $G\to V$ sometimes failing to induce a quasi-isometric embedding of action graphs.

\medskip

Our other main results concern Thompson's group $F$ and some of its generalizations. This is the group of piecewise-linear orientation-preserving homeomorphisms of the interval $[0,1]$ that have breakpoints in $\Z[\frac{1}{2}]$ and slopes powers of $2$. Using the usual lexicographic order on $\Cantor$ coming from the order $0<1$ on $\{0,1\}$, it turns out that $F$ is isomorphic to the subgroup of all order preserving elements of $V$. A wealth of background information on $F$ can be found for example in \cite{cannon96,belk04}. These two viewpoints of $F$ are connected by a semiconjugacy from $\Cantor$ to $[0,1]$. Here a \emph{semiconjugacy} $X\to Y$ between two topological spaces equipped with actions of a group $G$ is a continuous surjective $G$-equivariant map. Namely, for $\kappa=\epsilon_1 \epsilon_2 \epsilon_3 \cdots \in \Cantor$ ($\epsilon_i\in\{0,1\}$), the map sending $\kappa$ to the real number $\frac{\epsilon_1}{2} + \frac{\epsilon_2}{4} + \frac{\epsilon_3}{8} + \cdots$ is a continuous surjection $\Cantor \to [0,1]$, and it turns out that this map is $F$-equivariant. It will be more convenient in all that follows to consider actions on $\R$ and then use the one-point compactification $S^1=\R\cup\{\infty\}$ of $\R\cong(0,1)$ so that there is a unique global fixed point. Then the above should be viewed as a semiconjugacy from $\Cantor$ to $S^1$. (Strictly speaking semiconjugacies are between two actions, but when the actions are understood we will just refer to a semiconjugacy between two spaces.)

\begin{remark*}
The term ``semiconjugacy'' is not entirely uniquely defined in the literature, so let us emphasize what we mean. For us a semiconjugacy is a continuous surjective $G$-equivariant map from one $G$-space to another. Sometimes in the literature semiconjugacies are only considered from a $G$-space to itself. Additionally, continuity is sometimes not included in the definition, with the term topological semiconjugacy used when the assumption of continuity is included. Since our results are of the form, ``these things are semiconjugate,'' using a stronger definition gives us stronger results.
\end{remark*}

The second main result of this paper is that, for $F$ and groups similar to $F$, not only is this ``standard'' copy of the group inside $V$ semiconjugate to the standard copy acting on $S^1=\R\cup\{\infty\}$, but \emph{every} copy of the group inside $V$ is semiconjugate to the standard copy on $S^1$.

\begin{lettertheorem}\label{thrm:main}
Let $E\le\Homeo_+(\R)$ be finitely generated, locally moving with abelian germs (Definitions~\ref{def:lm} and~\ref{def:ag}), and FG-filtered (Definition~\ref{def:FG_filtered}). Then for any abstract embedding of $E$ into $V$, the resulting action of $E$ on $\Cantor$ is semiconjugate to the standard action of $E$ on $S^1$.
\end{lettertheorem}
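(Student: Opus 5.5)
Here is how I would try to prove Theorem~\ref{thrm:main}. The idea is to build the semiconjugacy $\Cantor\to S^1$ by hand, sending each point of Cantor space to the point of $\R$ that it "tracks" under the action. Fix an embedding $E\hookrightarrow V$. For $x\in\R$, let $E_{(x,\infty)}$ be the subgroup of elements supported in $(-\infty,x)$, and $E_{(-\infty,x)}$ the subgroup supported in $(x,\infty)$; more precisely, use the compactly supported pieces given by the FG-filtration. Local moving ensures these rigid stabilizers are large.

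\textbf{Step 1: use Theorem~\ref{thrm:q_tree} to control orbits.}
For each $\kappa\in\Cantor$, apply Theorem~\ref{thrm:q_tree} to the finitely generated filtration subgroups acting on the orbit of $\kappa$. This lets us ask which "end" of $\R$ the orbit graph of $\kappa$ escapes towards. The abelian-germ hypothesis enters here. The germs at $\pm\infty$ form an abelian group, so commutators of elements of $E$ are compactly supported. Consequently, a point of Cantor space cannot be moved in a "tree-like branching" way compatible with both ends. A quasi-tree whose isometries come from a group with abelian germs should have a coherent linear structure.

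\textbf{Step 2: define the candidate map.}
For each $\kappa$ I would define
\[
\kappa\,\phi=\inf\{x\in\R : \kappa \text{ is moved by some element of } E \text{ supported in }(-\infty,x)\},
\]
using the convention $\inf\emptyset=\infty$ and sending points "at $-\infty$" also to $\infty\in S^1$. Equivariance follows formally from conjugation: for $g\in E$, the element $g^{-1}hg$ is supported in $(\operatorname{supp} h)g$, which gives $(\kappa g)\phi=(\kappa\phi)g$.

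\textbf{Step 3: show $\phi$ is well defined, continuous and surjective.}
For well-definedness, the main issue is the "two-sided" behaviour: a point $\kappa$ must not be moved both by elements supported far left and by elements supported far right in an inconsistent way. I would derive this from the commutation of disjointly supported elements together with the tree structure from Step 1.

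Continuity comes from the fact that elements of $V$ act by prefix replacement. If $h$ moves $\kappa$, then $h$ moves every point in a clopen neighbourhood of $\kappa$. This gives upper semicontinuity of $\phi$, and the symmetric argument gives the matching lower bound.

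Surjectivity follows from three facts: $\Cantor$ is compact, the image is $E$-invariant and closed, and the action of $E$ on $\R$ is minimal on its support (a consequence of local moving). Hence the image is all of $S^1$, provided $\phi$ is nonconstant. To get nonconstancy, I would pick two disjointly supported, non-commuting-free copies inside $E$. By the Rubin-type fact that faithful actions force supports to behave, these yield points with different values.

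\textbf{Main obstacle.}
I expect the hardest step to be well-definedness and nonconstancy of $\phi$: ruling out that the abstract $V$-action is "exotic," for example pointwise fixed by everything supported on a half-line, or mixing both ends. My first attempt would combine the quasi-tree conclusion of Theorem~\ref{thrm:q_tree} with the FG-filtration. Every finitely generated piece of the filtration acts on orbits via a quasi-tree, so its orbits have well-defined ends. The abelian-germ property should then force the two ends of $\R$ to correspond to the two ends of a quasi-line inside the quasi-tree, which pins down $\kappa\phi$.
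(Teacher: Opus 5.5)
Your proposal has a genuine gap, and it begins with the definition in Step 2. Letting $\kappa\phi$ be determined by arbitrary elements of $E$ supported in a half-line does not give a semiconjugacy, because an abstract embedding can twist such elements through an abelian quotient.

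For example, let $F$ act in the standard way on both $0\Cantor$ and $1\Cantor$ (a diagonal copy), let $s\in V$ swap $0\kappa\leftrightarrow 1\kappa$, and set $(f)\iota=\mathrm{diag}(f)\,s^{\chi(f)}$, where $\chi\colon F\to\Z$ records the germ at $-\infty$. Since $s$ commutes with the diagonal copy, this is an embedding $F\to V$. Yet every $f$ supported in $(-\infty,x)$ with $\chi(f)$ odd moves every point of $\Cantor$, so your $\phi$ is constantly $\infty$.

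Restricting to compactly supported elements does not fix this in general. For $F_3$, which satisfies the hypotheses, $F_3^{ab}\cong\Z^3$ while the germs at the two ends account for only $\Z^2$. So some character $\nu$ is non-zero on elements of arbitrarily small compact support anywhere. Twisting a diagonal action on $k$ clopen copies of $\Cantor$ by the power $\nu$ of a $k$-cycle permuting them (with $k$ larger than the relevant value of $\nu$) causes the same collapse. This is exactly why the paper uses only the commutator subgroups $G[a,b]'=(E[a,b]')\iota$, which are simple by Proposition~\ref{prop:simple}, so no character can twist them. It then defines $(\kappa)\phi$ as the unique point of $\bigcap\{(a,b)\mid\kappa\in\Supp(G[a,b]')\}$.

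Second, even with that definition, the step you identify as the main obstacle is not handled by Step 1. Orbits escaping towards ends, or the two ends of $\R$ matching the ends of a quasi-line, is not an argument, and the action graphs are quasi-trees that need not be quasi-lines.

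What the paper extracts from Theorem~\ref{thrm:q_tree} is Corollary~\ref{cor:no_planes}: every $\Z^2\le V$ meets $\Stab_V(\kappa)$ non-trivially. The key result, Proposition~\ref{prop:disjoint_supports}, says $G[a,b]'$ and $G[c,d]'$ have disjoint supports when $(a,b)\cap(c,d)=\emptyset$. It is proved by manufacturing a $\Z^2$ meeting the stabilizer of a common support point trivially. One case uses simplicity of $G[a,b]'$; the other uses Lemma~\ref{lem:disjoint_cyclic} and Corollary~\ref{cor:disjoint_cyclic}. Uniqueness of the intersection point then uses Lemma~\ref{lem:gen}.

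Your continuity argument is also not symmetric. Being moved is an open condition, but being fixed by a group is only closed. To show that a point with $(\kappa)\phi=\infty$ has a neighbourhood fixed by $G[a,b]'$, the paper needs FG-filtration together with the fact that $V$ has cyclic germs. The simple non-cyclic $G[\ell,r]'$, and hence a finitely generated $H$ with $G[a,b]'\le H\le G[\ell,r]'$, then has trivial germ at $\kappa$ and so fixes a neighbourhood. This is where FG-filtered is really used, and your sketch never uses it.

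With these pieces in place, surjectivity is immediate from non-triviality of each $G[a,b]'$; no Rubin-type argument is needed.
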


The quintessential example of a group satisfying these hypotheses is Thompson's group $F$, so we conclude that for every copy of $F$ in $V$, the action of this copy on $\Cantor$ is semiconjugate to the standard action of $F$. We remark that there do exist copies of $F$ in $V$ that are not conjugate to the standard one (see Example~\ref{ex:crazy_F}).

As we said, it is convenient to work with the circle $S^1=\R\cup\{\infty\}$, so that there is a unique global fixed point. Given the focus on $S^1$, one might wonder for example whether Theorem~\ref{thrm:main} holds for things like Thompson's group $T$. However, since semiconjugacies send global fixed points to global fixed points, this cannot hold if the given action of $T$ fixes a point in $\Cantor$, which is easy to achieve. It would be interesting to try and formulate an appropriate statement about copies of $T$ in $V$, but we will not pursue this here.

There is quite a bit of literature on situations where arbitrary actions on the line or circle are semiconjugate to standard actions. For example, in \cite{brum21} the authors prove many results involving semiconjugacies from the line to itself, typically for groups that are locally moving, meaning the rigid stabilizer of each open interval has no global fixed points in the open interval. For example \cite[Theorem~5.3.5]{brum21} says that every action of Thompson's group $F$ on $\R$ by $C^1$ diffeomorphisms without discrete orbits is semiconjugate to the standard action. Another example is \cite[Theorem~7.2.2]{brum21}, which provides semiconjugacy rigidity for certain groups acting on the circle. We should also mention fundamental work of Rubin in \cite{rubin89,rubin96} on strong rigidity properties of locally dense actions; see also \cite{belk25}. Finally, see \cite{hyde23,lodha20} for related work on the interval and circle including non-embedding results.

\medskip

Our main application of Theorem~\ref{thrm:main} is to rule out certain groups (abstractly) embedding in $V$. For example, we prove that the Stein group $F_{2,3}$ cannot embed in $V$, answering a question of the third author.

\begin{lettertheorem}\label{thrm:non_embed}
Let $E\le \Homeo_+(\R)$ be robust (Definition~\ref{def:robust}), FG-filtered (Definition~\ref{def:FG_filtered}), and tame at $\infty$ (Definition~\ref{def:tame}). If $E$ embeds in $V$, then every finitely generated subgroup of the group of germs of $E$ at $\infty$ is cyclic.
\end{lettertheorem}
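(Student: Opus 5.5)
We would prove Theorem~\ref{thrm:non_embed} by combining the semiconjugacy result (Theorem~\ref{thrm:main}) with the quasi-tree structure of action graphs (Theorem~\ref{thrm:q_tree}), localized at the point of $\Cantor$ lying over $\infty$. Suppose $E$ embeds in $V$. Robustness should supply the hypotheses of Theorem~\ref{thrm:main}: local movement, abelian germs, and (after passing to a suitable finitely generated subgroup if needed) finite generation. We therefore obtain a semiconjugacy $\pi\colon\Cantor\to S^1$ for the induced action. Since $\infty$ is the unique global fixed point in $S^1$, the fiber $C=\pi^{-1}(\infty)$ is a closed $E$-invariant set. Tameness at $\infty$ should let us show that $C$ is small, ideally finite. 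If $C$ is finite, a finite-index subgroup fixes a point $\kappa\in C$. In any case, the group of germs of $E$ at $\infty$ should then be realized, via $\pi$, as a quotient of the germ group at $C$ of a subgroup of $V$.

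Next we analyze germs of elements of $V$ at a fixed point $\kappa$. An element $g\in V$ fixing $\kappa$ acts near $\kappa$ as a prefix replacement $w\kappa'\mapsto u\kappa'$ with $w,u$ prefixes of $\kappa$. So the germ at $\kappa$ is determined by $(|w|,|u|)$, together with whether $\kappa$ is eventually periodic. For $\kappa$ eventually periodic with period $p$, the germ group of $\Stab_V(\kappa)$ at $\kappa$ embeds in $\Z$, via $g\mapsto(|u|-|w|)/p$. For $\kappa$ not eventually periodic, only the trivial germ survives. In either case every group of germs at a point of $\Cantor$ is cyclic, and hence so is every finitely generated subgroup of it.

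The crux is transferring this back. Take a finitely generated subgroup $H$ of the germ group of $E$ at $\infty$, with lifts $h_1,\dots,h_n\in E$. We need that $\pi$ intertwines germs. If $g\in E$ has trivial germ at $\kappa$, then $g$ is the identity near $\kappa$. Since $\pi$ is continuous, surjective, and equivariant, it maps a neighbourhood of $\kappa$ onto a neighbourhood of $\infty$, or at least onto a one-sided one. Here FG-filtered and tameness should be used to guarantee neighbourhoods map to neighbourhoods, so that $g$ also has trivial germ at $\infty$. This yields a surjection from the cyclic germ group at $\kappa$ of $\langle h_i\rangle$ onto $H$, and hence $H$ is cyclic.

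The main obstacle is the step where $C$ might be infinite, e.g.\ a Cantor set, with no point fixed by a finite-index subgroup. There I would first try Theorem~\ref{thrm:q_tree}: the action graph of $\langle h_i\rangle$ on an orbit in $C$ is a quasi-tree. Meanwhile the germs at $\infty$, being abelian and non-cyclic when $H\cong\Z^2$, should force a quasi-flat in that graph, contradicting the quasi-tree property. Tameness at $\infty$ is presumably exactly what makes the orbit map from the germ group to the action graph a quasi-isometric embedding.
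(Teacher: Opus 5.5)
There is a genuine gap. Your whole argument lives on the fibre $C=\pi^{-1}(\infty)$, and none of the hypotheses controls that fibre. Tameness and FG-filtering give neither finiteness of $C$ nor a point of $C$ fixed by the lifts $h_i$. Worse, the germ-transfer claim (that a neighbourhood of $\kappa\in C$ maps onto a possibly one-sided neighbourhood of $\infty$) is false.

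Here is an example. Let $F$ act standardly on $0\Cantor$, and multiply by $t^{\chi(f)}$. Here $\chi\colon F\to\Z$ is the exponent of the slope at the \emph{left} end, and $t\in V$ is supported in $1\Cantor$ and fixes $\kappa=1\overline{0}$ with non-trivial germ. This is still an embedding. Since $E'\le\ker\chi$, every $G[a,b]'$ acts trivially on $1\Cantor$, so all of $1\Cantor$ maps to $\infty$: a whole neighbourhood of $\kappa$ collapses to a point. Moreover $\kappa$ is fixed by all of $G$. An element of $F$ with slope $1$ near $0$ but not near $1$ has trivial germ at $\kappa$ but non-trivial germ at $\infty$, so there is no surjection from the germ group at $\kappa$ onto $H$. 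The orbit of $\kappa$ is a single point, so your fallback quasi-flat does not appear either. In general the action on $C$ can factor through characters of $E$ that do not see the germ at $+\infty$ at all.

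The missing idea is to work over a large \emph{finite} point rather than over $\infty$. Robustness makes the germ group at $\infty$ abelian, since commutators have compact support. So it suffices to show that two elements $e,f$ with $\infty$ as an attractor have germs with a common power.

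1. Robustness puts $[e^{-1},f^{-1}]$ in some $E[a,b]'$, so $e$ and $f$ commute on $[b,\infty)$.
2. Pick $x>b$. Each $[e^{-n},f^{-m}]$ is compactly supported and fixes $[b,\infty)$, so by robustness it lies in some $E[c,d]'$ with $d<x$.
3. Take $\kappa\in\phi^{-1}(x)$; surjectivity of $\phi$ is where FG-filtering enters. By the very definition of $\phi$, $\kappa\notin\Supp(G[c,d]')$, so $\kappa.(e^nf^m)\iota=\kappa.(f^me^n)\iota$ for all $n,m\in\N$.
4. This gives a $1$-Lipschitz map of the quadrant $\N^2$ into an action graph of $V$. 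By the argument of Corollary~\ref{cor:no_planes} (via Theorem~\ref{thrm:q_tree}), this map cannot be injective.
5. Pushing a collision down by $\phi$ and using monotonicity gives $x.e^n=x.f^m$ with $n,m>0$. Hence $f^me^{-n}$ fixes the points $x.e^i\to\infty$.
6. Only at this last step does tameness enter: it gives that $f^me^{-n}$ fixes a ray, so $e^n$ and $f^m$ have the same germ at $\infty$.

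Separately, you do not need to pass to a finitely generated subgroup. The construction of $\phi$ uses only local movement, abelian germs and FG-filtering, and a subgroup would typically not be locally moving.
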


Here the \emph{group of germs at $\infty$} of $E$ is the quotient of $E$ by the normal subgroup of all elements $f$ such that $f$ fixes some open ray $(a,\infty)$. We will also use the \emph{group of germs at $x$} of a group $G$ acting on a topological space $X$ with $x\in X$, which is the quotient of $\Stab_G(x)$ by the normal subgroup of all elements $g$ such that $g$ fixes an open neighborhood of $x$. In either case, the image of an element in the group of germs is the \emph{germ} of the element.

\begin{lettercorollary}\label{cor:stein_no_V}
The Stein group $F_{2,3}$ does not embed in $V$.
\end{lettercorollary}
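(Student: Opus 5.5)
The plan is to derive Corollary~\ref{cor:stein_no_V} directly from Theorem~\ref{thrm:non_embed}. We will verify that the Stein group $F_{2,3}$, realized as a subgroup of $\Homeo_+(\R)$, satisfies the three hypotheses. Then we will exhibit a finitely generated, non-cyclic subgroup of its group of germs at $\infty$.

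\textbf{Realization.} Recall that $F_{2,3}$ is the group of piecewise-linear orientation-preserving homeomorphisms of $[0,1]$ with breakpoints in $\Z[\frac{1}{6}]$ and slopes in the multiplicative group $\langle 2,3\rangle$. Conjugate by a suitable homeomorphism $(0,1)\to\R$ so that the right endpoint $1$ is sent to $\infty$. The cleanest choice makes neighbourhoods of $1$ correspond to neighbourhoods of $+\infty$ and preserves the PL structure near the ends, for instance a piecewise-linear identification with countably many breakpoints.

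\textbf{Hypotheses.} We then check that this copy of $F_{2,3}$ is robust (Definition~\ref{def:robust}), FG-filtered (Definition~\ref{def:FG_filtered}) and tame at $\infty$ (Definition~\ref{def:tame}). These checks should parallel whatever verification the paper gives for $F$ itself.
\begin{itemize}
\item Local moving and robustness follow from the standard fact that the rigid stabilizer of any dyadic-triadic subinterval contains a copy of $F_{2,3}$ acting on that subinterval.
\item FG-filtering uses that $F_{2,3}$ is finitely generated and that the subgroups supported on $[0,a]$ for $a\in\Z[\frac16]$ are again isomorphic to $F_{2,3}$. Such subgroups exhaust the elements with compact support away from $1$.
\item Tameness at $\infty$ is a local statement near the endpoint. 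It should follow from the germs being affine maps $x\mapsto \lambda x + c$ near $1$.
\end{itemize}

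\textbf{Germs.} The group of germs of $F_{2,3}$ at the endpoint $1$ (that is, at $\infty$) is given by the slope homomorphism $f\mapsto f'(1^-)$. This map has kernel exactly the elements fixing a left neighbourhood of $1$, because an element with slope $1$ near $1$ is the identity there. Its image is all of $\langle 2,3\rangle\cong\Z^2$, since $F_{2,3}$ contains elements whose final slope is $2$ and elements whose final slope is $3$, by the standard construction with subdivisions into halves and thirds. So the germ group is $\Z^2$, which is finitely generated and not cyclic. Theorem~\ref{thrm:non_embed} then forbids an embedding of $F_{2,3}$ into $V$.

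\textbf{Main obstacle.} The hard part will be verifying robustness and tameness at $\infty$ for $F_{2,3}$, since these definitions are technical and not yet visible. I would first try to reduce them to properties shared with $F$: transitivity of the rigid stabilizers on $\Z[\frac16]$-points, and affine germs at the end. I would then copy the argument used for $F$.
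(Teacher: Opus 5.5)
Your reduction is the paper's. Corollary~\ref{cor:stein_no_V} follows from Theorem~\ref{thrm:non_embed}, and your handling of tameness and of the germ group is fine: near $1$ an element is $x\mapsto 1-\lambda(1-x)$, and the slope at $1^-$ gives germ group $\langle 2,3\rangle\cong\Z^2$. The choice of identification $(0,1)\cong\R$ is irrelevant, since all the hypotheses are invariant under conjugation by increasing homeomorphisms.

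The gap is that the substantive part of the corollary beyond Theorem~\ref{thrm:non_embed} is never verified: that $F_{2,3}$ is robust and FG-filtered, which the paper proves in Proposition~\ref{prop:BS_groups_good}. Your FG-filtered sketch also targets the wrong property. FG-filtered asks, for each $a<b$, for some $\ell<a<b<r$ and a finitely generated $H$ with $E[a,b]'\le H\le E[\ell,r]'$. Neither finite generation of $F_{2,3}$ and its rigid stabilizers nor an exhaustion by copies of $F_{2,3}$ produces such an $H$. (The groups $F_{2,3}[0,a]$ you mention are not even compactly supported in $\R$.) The ambient group $E[\ell,r]'$ is never finitely generated. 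By Lemma~\ref{lem:LoMAG} each of its elements lies in some $E[c,d]'$ with $\ell<c<d<r$, so finitely many elements fix a common non-empty subinterval near $\ell$. But $E[\ell,r]'$ has no global fixed point in $(\ell,r)$ by Lemma~\ref{lem:comm_move}. Whether a natural finitely generated overgroup, such as a rigid stabilizer $E[a,b]\cong F_{2,3}$, lies inside $E[\ell,r]'$ is a question about abelianizations that you do not address. The paper instead takes $H=E[\ell,r]'\cap E[a,b]$ and shows it is finitely generated using the Bieri--Neumann--Strebel criterion of Lemma~\ref{lem:coabelian}. That criterion says a subgroup between $F(I;A,P)'$ and a finitely generated $F(I;A,P)$ is finitely generated iff it has non-trivial germs at both endpoints. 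The paper also uses that $F_{2,3}[1/4,3/4]\cong F_{2,3}$ is finitely generated.

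Similarly, the fact that rigid stabilizers contain copies of $F_{2,3}$ gives local moving, but robustness demands more. It also requires abelian germs at interior points and $E'=\bigcup_{a<b}E[a,b]'$. Above all, it requires that any $g\in E'$ whose support has closure in $(c,d)$ lies in $E[c,d]'$ itself, not merely in some larger $E[a,b]'$. This is exactly what the proof of Theorem~\ref{thrm:non_embed} uses to place the commutators $[e^{-n},f^{-m}]$ in rigid stabilizers of intervals avoiding $(\kappa)\phi$, and nothing in your sketch gives it. The paper obtains it from multi-point transitivity on orbits in $A$ (Citation~\ref{cit:BS_trans}). Given $g\in E[a,b]'$ with $\Supp(g)\subseteq(x,y)$ and $a<c<x<y<d<b$, all in one orbit, take $h\in E$ sending $a,x,y,b$ to $c,x,y,d$. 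Redefine $h$ on $[x,y]$ so it is the identity there, which is allowed since $x,y\in A$. Then $h$ commutes with $g$ and conjugates $E[a,b]'$ onto $E[c,d]'$, so $g\in E[c,d]'$. Your instinct that transitivity on $\Z[\frac16]$-points is the right tool is correct, but as written the proposal defers exactly the steps that need proof.
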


The \emph{Stein group} $F_{2,3}$, first studied in depth by Stein in \cite{stein92} along with a broad family of related groups, is the group of all piecewise-linear orientation-preserving homeomorphisms of $[0,1]$ with breakpoints in $\Z[1/6]$ and slopes of the form $2^m 3^n$ ($m,n\in\Z$). Identifying $(0,1)$ with $\R$, the group of germs of $F_{2,3}$ at $\infty$ is isomorphic to $\Z^2$, i.e., not cyclic. The Stein group naturally contains $F$, and also the ternary relative $F_3$ of $F$. In \cite{lodha20}, Lodha proved that $F_{2,3}$ cannot abstractly embed as a subgroup of $F$ (see also \cite{hyde23} for another proof by the first author and Moore), so Corollary~\ref{cor:stein_no_V} is a strengthening of this. The proofs in \cite{lodha20} and \cite{hyde23} do not seem to have easy analogs working with Cantor space, so instead we use the semiconjugacies we construct in Theorem~\ref{thrm:main} to prove Theorem~\ref{thrm:non_embed} and Corollary~\ref{cor:stein_no_V}.

One source of interest in the question of which groups embed in $V$ is Lehnert's conjecture, which predicts that every co-context-free group embeds in $V$; see \cite{bleak16,lehnert_thesis,lehnert07}. We do not know whether $F_{2,3}$ is co-context-free, though we would expect it is not.

\subsection*{Acknowledgments} We are grateful to Corentin Bodart, Jos\'e Burillo, Mart\'in Gilabert Vio, Yash Lodha, Justin Moore, and Ran Tao for helpful conversations, and to Francesco Fournier-Facio for many helpful comments on a preliminary version. We also thank Jim Belk, Collin Bleak, and Martyn Quick for providing Example~\ref{ex:crazy_F}. RS is supported by NSF DMS-2506840.

\section{Action graphs}\label{sec:action}

Throughout, we view connected graphs as metric spaces by giving each edge length $1$ and using the usual path metric. We will mostly focus on vertex sets of graphs, with the induced metric coming from the path metric on the graph.

\begin{definition}[Action graph]
Let $G$ be a group acting on a set $X$, and let $S$ be a finite symmetric subset of $G$. The \emph{action graph} $\AG_X(G,S)$ associated to these data is the directed graph with vertex set $V(\AG_X(G,S))=X$ and a directed edge $(x,x.s)$ for each $x\in X$ and $s\in S$.
\end{definition}

For example if $X=G$ with the usual translation action, then $\AG_G(G,S)$ is the Cayley graph $\Cay(G,S)$. More generally if $X=H\backslash G$ for some $H\le G$ then the action graph is the Schreier graph. Note that an action graph is connected if and only if the subgroup of $G$ generated by $S$ acts transitively on $X$. In this case, the action graph is canonically isomorphic to the Schreier graph for the subgroup $H=\Stab_G(x)$ for any fixed $x\in X$.

Now we specialize to action graphs of subgroups of Thompson's group $V$.

\begin{definition}[The graph $\Graph_\kappa$]
Consider the directed graph with vertex set $\Cantor$ and directed edges $(\kappa,0\kappa)$ and $(\kappa,1\kappa)$ for each $\kappa\in\Cantor$ (allowing for loops). Note that the degree of each vertex is $3$, with one incoming and two outgoing edges. For a given $\kappa\in\Cantor$, write $\Graph_\kappa$ for the connected component of this graph that contains the vertex $\kappa$. Note that $\Graph_\kappa=\Graph_{\kappa'}$ if and only if $\kappa$ and $\kappa'$ differ by finite prefixes.
\end{definition}

The structure of the component $\Graph_\kappa$ depends on whether $\kappa$ is \emph{rational}, meaning of the form $\kappa = w\overline{u}$ for some finite words $w,u\in\{0,1\}^*$, or \emph{irrational}, meaning not of this form.

\begin{lemma}\label{lem:q_tree}
If $\kappa$ is irrational then $\Graph_\kappa$ is a $3$-regular tree. If $\kappa$ is rational then $\Graph_\kappa$ is the union of finitely many bounded degree rooted binary trees together with a cycle graph $\cycle_\kappa$ each of whose vertices is adjacent to one of these trees' roots. In particular $\Graph_\kappa$ is always a $3$-regular quasi-tree.
\end{lemma}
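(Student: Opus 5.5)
The plan is to study the map $\sigma\colon\Cantor\to\Cantor$ deleting the first letter, $\epsilon_1\epsilon_2\cdots\mapsto\epsilon_2\epsilon_3\cdots$. Every edge of the graph has the form $(\kappa,\epsilon\kappa)$, so each vertex $\eta$ has exactly one incoming edge, namely from $\sigma(\eta)$, and two outgoing edges, to $0\eta$ and $1\eta$. Thus the edges are precisely the pairs $\{\eta,\sigma(\eta)\}$, counted with the correct orientation, and $\Graph_\kappa$ is the connected component of $\kappa$ in the "functional graph" of $\sigma$. Two vertices lie in the same component exactly when $\sigma^m(\eta)=\sigma^n(\eta')$ for some $m,n$, which recovers the remark that components correspond to equivalence under changing finite prefixes.

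\emph{Cycles.} I will first classify the cycles, viewing the graph as undirected. Because each vertex has in-degree one, an undirected cycle in a functional graph must be a directed cycle. Otherwise some vertex on it would receive two distinct incoming edges from the cycle, or the cycle would use an edge with multiplicity; a loop $\eta\to\eta$ counts as a cycle of length $1$. A directed cycle has the form $\eta=\sigma^p(\eta)$, which means $\eta$ is purely periodic, $\eta=\overline{u}$. A component contains at most one directed cycle, again by the in-degree-one property: following incoming edges backward from any vertex produces the unique $\sigma$-forward orbit, and this orbit enters at most one cycle.

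\emph{Irrational case.} If $\kappa$ is irrational, then no element of its component is eventually periodic, so the component has no cycles and is a tree. Every vertex has degree $3$; loops occur only at $\overline{0}$ and $\overline{1}$, which are rational, and no multiple edges occur because $0\eta\neq1\eta$. Hence the component is the $3$-regular tree.

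\emph{Rational case.} If $\kappa=w\overline{u}$ with $u$ primitive of length $p$, the unique cycle $\cycle_\kappa$ consists of the $p$ cyclic shifts of $\overline{u}$. Deleting the edges of $\cycle_\kappa$, each cycle vertex $\eta$ keeps exactly one outgoing edge, to $\epsilon\eta$ where $\epsilon\eta$ is not the predecessor of $\eta$ on the cycle. The descendants of that vertex form a rooted tree in which each vertex has two children. Every vertex of the component eventually maps under $\sigma$ into the cycle, so it lies in one of these $p$ trees. This gives the description in the statement: finitely many rooted binary trees, each with its root adjacent to a cycle vertex. The degenerate cases $p=1$ (loops at $\overline0$ and $\overline1$) should be mentioned separately.

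\emph{Quasi-tree.} Collapsing the finite cycle $\cycle_\kappa$ to a point is a quasi-isometry, since it moves distances by at most $p$. The resulting graph is a tree, so $\Graph_\kappa$ is a quasi-tree. I expect no real obstacle; the only care needed is the claim that undirected cycles in a graph with in-degree one are directed, and the degenerate loop cases.
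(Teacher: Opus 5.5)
Your proposal is correct and follows essentially the same route as the paper. In the irrational case the component is a tree. In the rational case it is the cycle formed by the $|u|$ cyclic shifts of $\overline{u}$ (with $u$ primitive), with a rooted binary tree hanging off each cycle vertex, and collapsing the cycle gives the quasi-isometry to a tree. Your shift-map and in-degree-one framing adds rigour rather than changing the approach: it justifies points the paper states tersely, namely why irrational components have no cycles (the paper's ``unique reduced path''), why the cycle is unique, and why every vertex lies in exactly one of the hanging trees.
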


Here a \emph{quasi-tree} is a metric space quasi-isometric to a tree.

\begin{proof}
If $\kappa$ is irrational, then there is a unique reduced path between any two vertices, hence $\Graph_\kappa$ is a $3$-regular tree. Now suppose $\kappa$ is rational, say $\kappa=w\overline{u}$ for $u$ of minimum length and $u$ not a suffix of $w$. Since $\Graph_{w\overline{u}}=\Graph_{\overline{u}}$, without loss of generality $w$ is empty. Let $\cycle_\kappa$ be the finite set of vertices of the form $v\overline{u}$ for $v$ a suffix of $u$, so $\cycle_\kappa$ has $|u|$ many vertices. We cannot have both $0v$ and $1v$ be suffixes of $u$, so the induced subgraph on the vertex set $\cycle_\kappa$ is a cycle of length $|u|$. Each vertex $v\overline{u}$ of $\cycle_\kappa$ is adjacent to exactly one vertex outside $\cycle_\kappa$, whichever of $0v\overline{u}$ or $1v\overline{u}$ is not in $\cycle_\kappa$. Now the induced subgraph of $\Graph_\kappa$ with vertices obtained by adding prefixes to whichever of $0v\overline{u}$ or $1v\overline{u}$ is not in $\cycle_\kappa$ is a rooted binary tree. For different $v$, these trees are disjoint. Thus we obtain the desired description of $\Graph_\kappa$, and collapsing $\cycle_\kappa$ to a vertex defines a quasi-isometry to a tree.
\end{proof}

See Figures~\ref{fig:irrational_graph} and~\ref{fig:rational_graph} for examples in the irrational and rational cases.

\begin{figure}[htb]
\begin{center}
\begin{tikzpicture}[line width=1]
\draw[mid arrow] (3.5,-3.5) -- (1.5,-1.5);
\draw[mid arrow] (1.5,-1.5) --  (0.5,-0.5);
\draw[mid arrow] (0.5,-0.5) -- (0,0);
\draw[mid arrow] (0.5,-0.5) -- (1,0);
\draw[mid arrow] (1.5,-1.5) --  (2.5,-0.5);
\draw[mid arrow] (2.5,-0.5) --  (2,0);
\draw[mid arrow] (2.5,-0.5) --  (3,0);
\draw[mid arrow] (3.5,-3.5) -- (5.5,-1.5);
\draw[mid arrow] (5.5,-1.5) --  (6.5,-0.5);
\draw[mid arrow] (6.5,-0.5) -- (7,0);
\draw[mid arrow] (6.5,-0.5) -- (6,0);
\draw[mid arrow] (5.5,-1.5) --  (4.5,-0.5);
\draw[mid arrow] (4.5,-0.5) --  (4,0);
\draw[mid arrow] (4.5,-0.5) --  (5,0);
\draw[mid arrow] (3.5,-4.5) -- (3.5,-3.5);

\node at (3.5,-4.65) {$\vdots$};

\filldraw (0,0)circle(1.5pt)   (1,0)circle(1.5pt)   (2,0)circle(1.5pt)   (3,0)circle(1.5pt)   (4,0)circle(1.5pt)   (5,0)circle(1.5pt)   (6,0)circle(1.5pt)   (7,0)circle(1.5pt)   (0.5,-0.5)circle(1.5pt)   (2.5,-0.5)circle(1.5pt)   (4.5,-0.5)circle(1.5pt)   (6.5,-0.5)circle(1.5pt)   (1.5,-1.5)circle(1.5pt)   (5.5,-1.5)circle(1.5pt)   (3.5,-3.5)circle(1.5pt);
 
\node at (3.7,-3.7) {$\kappa$};
\node at (0.7,-1.7) {$\kappa_0=0\kappa$};
\node at (0,-0.7) {$00\kappa$};
\node at (-0.3,0.25) {$000\kappa$};
\node at (0.8,0.25) {$100\kappa$};
\node at (1.9,0.25) {$010\kappa$};
\node at (3,0.25) {$110\kappa$};
\node at (4,0.25) {$001\kappa$};
\node at (5.1,0.25) {$010\kappa$};
\node at (6.2,0.25) {$011\kappa$};
\node at (7.3,0.25) {$111\kappa$};
\node at (2.9,-0.7) {$10\kappa$};
\node at (5.9,-1.7) {$1\kappa$};
\node at (4.1,-0.7) {$01\kappa$};
\node at (7,-0.7) {$11\kappa$};
\end{tikzpicture}
\end{center}
\caption{Part of $\Graph_\kappa$, for some irrational $\kappa$.}
\label{fig:irrational_graph}
\end{figure}

\begin{figure}[htb]
\begin{center}
\begin{tikzpicture}[scale=1, every node/.style={circle, fill=black, inner sep=1.5pt}, line width=1]

\node (a) at (0,0) [label=below:$01\kappa$] {};
\node (b) at (3,0) [label=below:$1\kappa$] {};
\node (c) at (6,0) [label=below:$\kappa$] {};

\draw[mid arrow] (b) -- (a);
\draw[mid arrow] (c) --  (b);

\node (a1) at (0,1.5) [label=right:$101\kappa$] {};
\node (b1) at (3,1.5) [label=right:$11\kappa$] {};
\node (c1) at (6,1.5) [label=right:$0\kappa$]  {};

\draw[mid arrow] (a) -- (a1);
\draw[mid arrow] (b) -- (b1);
\draw[mid arrow] (c) -- (c1);

\node (a2) at (-0.8,2.5) [label={[yshift=-11pt]above:$0101\kappa$}] {};
\node (a3) at (0.8,2.5) [label={[yshift=-11pt]above:$1101\kappa$}] {};
\draw[mid arrow] (a1) -- (a2);
\draw[mid arrow] (a1) -- (a3);

\node (b2) at (2.2,2.5) [label={[yshift=-8pt]above:$010\kappa$}] {};
\node (b3) at (3.8,2.5) [label={[yshift=-8pt]above:$110\kappa$}] {};
\draw[mid arrow] (b1) -- (b2);
\draw[mid arrow] (b1) -- (b3);

\node (c2) at (5.2,2.5)  [label={[yshift=-5pt]above:$00\kappa$}] {};
\node (c3) at (6.8,2.5) [label={[yshift=-5pt]above:$10\kappa$}] {};
\draw[mid arrow] (c1) -- (c2);
\draw[mid arrow] (c1) -- (c3);

\draw[mid arrow]
  (a) 
  .. controls (1.5,-1.5) and (4.5,-1.5) .. 
  (c);
\end{tikzpicture}

\end{center}
\caption{Part of $\Graph_\kappa$, for $\kappa$ the rational point $\overline{001}$.}
\label{fig:rational_graph}
\end{figure}

Let $G$ be a finitely generated subgroup of $V$, with $S$ a finite symmetric generating set for $G$. Fix $\kappa_0\in\Cantor$ and let $X=\kappa_0.G$, and let $\Gamma=\AG_X(G,S)$. Since $V$ and hence $G$ acts on $\Cantor$ by piecewise prefix replacements, the vertex set of $\Gamma$ is a subset of the vertex set of $\Graph\coloneqq\Graph_{\kappa_0}$. Note that edges in $\Gamma$ might not be edges in $\Graph$, but since $S$ is finite, for any edge in $\Gamma$ there is a path in $\Graph$ of uniformly bounded length connecting its endpoints. Thus the inclusion $V(\Gamma)\to V(\Graph)$ is Lipschitz.

\begin{definition}[Discrepancy]
For $\kappa,\kappa'\in V(\Graph)$, define the \emph{discrepancy} from $\kappa$ to $\kappa'$ as follows. First, if $\kappa$ and $\kappa'$ are adjacent, then define the discrepancy to be $0$ if $\kappa$ and $\kappa'$ lie in $\cycle_{\kappa_0}$ (which is only relevant if $\kappa_0$ is rational), and otherwise define it to be $1$ if the directed edge between $\kappa$ and $\kappa'$ is $(\kappa,\kappa')$ and $-1$ if it is $(\kappa',\kappa)$. Now for arbitrary $\kappa$ and $\kappa'$, choose a geodesic edge path from $\kappa$ to $\kappa'$ with vertices $\kappa=\kappa_0,\dots,\kappa_n=\kappa'$, and define the discrepancy from $\kappa$ to $\kappa'$ to be the sum of the discrepancies from $\kappa_i$ to $\kappa_{i+1}$, for $i=0,\dots,n-1$. Since geodesics are unique up to possibly having two options in $\cycle_{\kappa_0}$, this is well defined.
\end{definition}

For $\kappa\in V(\Graph)$ define
\[
f_\kappa \colon V(\Graph)\to \Z
\]
by sending $\kappa'$ to the discrepancy from $\kappa$ to $\kappa'$. Recalling that $V(\Gamma)\subseteq V(\Graph)$, the function $f_\kappa$ is naturally defined on $V(\Gamma)$ as well. We will abuse notation and also write $f_\kappa\colon \Graph\to\R$ and $f_\kappa\colon \Gamma\to\R$ for $f$ extended affinely to each edge, of either $\Graph$ or $\Gamma$. See Figure~\ref{fig:discrepancy} for an example in the irrational case.

\begin{figure}[htb]
\begin{center}
\begin{tikzpicture}[line width=1]
\draw[line width=0.75,dotted,color=gray] (-2,0) -- (8,0)   (-2,-0.5) -- (8,-0.5)   (-2,-1.5) -- (8,-1.5)   (-2,-3.5) -- (8,-3.5);
\draw[mid arrow] (3.5,-3.5) -- (1.5,-1.5);
\draw[mid arrow] (1.5,-1.5) --  (0.5,-0.5);
\draw[mid arrow] (0.5,-0.5) -- (0,0);
\draw[mid arrow] (0.5,-0.5) -- (1,0);
\draw[mid arrow] (1.5,-1.5) --  (2.5,-0.5);
\draw[mid arrow] (2.5,-0.5) --  (2,0);
\draw[mid arrow] (2.5,-0.5) --  (3,0);
\draw[mid arrow] (3.5,-3.5) -- (5.5,-1.5);
\draw[mid arrow] (5.5,-1.5) --  (6.5,-0.5);
\draw[mid arrow] (6.5,-0.5) -- (7,0);
\draw[mid arrow] (6.5,-0.5) -- (6,0);
\draw[mid arrow] (5.5,-1.5) --  (4.5,-0.5);
\draw[mid arrow] (4.5,-0.5) --  (4,0);
\draw[mid arrow] (4.5,-0.5) --  (5,0);
\draw[mid arrow] (3.5,-4.5) -- (3.5,-3.5);

\node at (3.5,-4.65) {$\vdots$};

\filldraw (0,0)circle(1.5pt)   (1,0)circle(1.5pt)   (2,0)circle(1.5pt)   (3,0)circle(1.5pt)   (4,0)circle(1.5pt)   (5,0)circle(1.5pt)   (6,0)circle(1.5pt)   (7,0)circle(1.5pt)   (0.5,-0.5)circle(1.5pt)   (2.5,-0.5)circle(1.5pt)   (4.5,-0.5)circle(1.5pt)   (6.5,-0.5)circle(1.5pt)   (1.5,-1.5)circle(2.5pt)   (5.5,-1.5)circle(1.5pt)   (3.5,-3.5)circle(1.5pt);
 
\node at (3.7,-3.7) {$\kappa$};
\node at (0.7,-1.7) {$\kappa_0=0\kappa$};
\node at (0,-0.7) {$00\kappa$};
\node at (-0.3,0.25) {$000\kappa$};
\node at (0.8,0.25) {$100\kappa$};
\node at (1.9,0.25) {$010\kappa$};
\node at (3,0.25) {$110\kappa$};
\node at (4,0.25) {$001\kappa$};
\node at (5.1,0.25) {$010\kappa$};
\node at (6.2,0.25) {$011\kappa$};
\node at (7.3,0.25) {$111\kappa$};
\node at (2.9,-0.7) {$10\kappa$};
\node at (5.9,-1.7) {$1\kappa$};
\node at (4.1,-0.7) {$01\kappa$};
\node at (7,-0.7) {$11\kappa$};

\filldraw[white] (10,0)circle(1.5pt);
\node at (-2.7,0) {$f_{\kappa_0}=2$};
\node at (-2.7,-0.5) {$f_{\kappa_0}=1$};
\node at (-2.7,-1.5) {$f_{\kappa_0}=0$};
\node at (-2.7,-3.5) {$f_{\kappa_0}=-1$};
\end{tikzpicture}
\end{center}
\caption{The graph from Figure~\ref{fig:irrational_graph}, with the $f_{\kappa_0}$ values of the vertices marked using $\kappa_0=0\kappa$.}
\label{fig:discrepancy}
\end{figure}

\begin{lemma}\label{lem:conn_cpts}
For each $d\in\N$ there exists a constant $A_d$ such that for all $m\in\Z$, every connected component of $([m,m+d])f_{\kappa_0}^{-1}$ in $\Gamma$ contains at most $A_d$ many vertices.
\end{lemma}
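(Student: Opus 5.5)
The plan is to bound the size of a component in two stages. First, the discrepancy $f_{\kappa_0}$ controls prefix lengths up to a bounded error. Second, a connected set of $\Gamma$-vertices on which $f_{\kappa_0}$ varies little can only wander through a bounded region of $\Graph$.

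\emph{Step 1: prefix replacements.} Since $S$ is finite and each $s\in S$ is a piecewise prefix replacement, there is a finite list of pairs $(w,u)$ covering all the generators, with $(w\kappa)s=u\kappa$. Let $L$ be the maximum of $|w|$ and $|u|$ over these pairs.

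\emph{Step 2: effect of one edge on $f_{\kappa_0}$.} For a $\Gamma$-edge from $w\kappa$ to $u\kappa$, the discrepancy changes by exactly $|u|-|w|$. This holds off the cycle $\cycle_{\kappa_0}$, since discrepancy counts added prefix letters minus removed ones. If the path between the two vertices passes near the cycle, the change is $|u|-|w|$ up to an error of at most $|\cycle_{\kappa_0}|$. Consequently, along any edge path in $\Gamma$ the value of $f_{\kappa_0}$ changes by at most $L'$ per step. Likewise each $\Gamma$-edge corresponds to a $\Graph$-path of length at most $2L+|\cycle_{\kappa_0}|$, which is the Lipschitz statement already noted before the lemma.

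\emph{Step 3: a component lies near a single branch.} Let $C$ be a connected component of $([m,m+d])f_{\kappa_0}^{-1}$ in $\Gamma$, and fix a vertex $x\in C$. The structure of $\Graph$ from Lemma~\ref{lem:q_tree} gives the following. Outside the cycle, $f_{\kappa_0}$ is a height function on a tree in which every vertex has exactly one downward edge, namely deletion of the first letter. So $\Graph$ admits a retraction "go down" that decreases height by $1$. The key observation is that two vertices $y,z$ of $\Graph$ at heights in $[m,m+d]$ that are joined by a $\Gamma$-path staying in that height band must share a common "ancestor" at height at least $m-L''$. Two consequences follow:
\begin{itemize}
\item Each $\Gamma$-edge moves between vertices whose ancestors at depth $L$ below coincide, since the edge only alters a prefix of length at most $L$.
\item Inductively, all of $C$ lies in the set of descendants of one vertex $a$ with $f_{\kappa_0}(a)\ge m-L$, intersected with heights $\le m+d$. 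There is a caveat near the cycle, where descendants of the cycle vertices also come into play. Since the cycle is finite and each of its vertices carries one rooted tree, this adds only a bounded factor.
\end{itemize}
The number of such descendants is at most $2^{d+L}\cdot(|\cycle_{\kappa_0}|+1)$, so we may take $A_d$ to be this bound.

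The main obstacle is making the "common ancestor" inductive step rigorous. The worry is that a path staying in the band might drift by repeatedly deleting and re-adding letters. For example, $w\kappa\mapsto u\kappa$ could change the letter at depth $L$ below the current height, and a sequence of such moves could climb sideways. My approach is as follows:
\begin{itemize}
\item Define the ancestor $\alpha(y)$ as the unique vertex obtained from $y$ by deleting letters until height $m-L$ is reached.
\item Show that $\alpha$ is constant along $\Gamma$-edges with both endpoints in the band. The edge replaces a prefix of length at most $L$, and both endpoints have height at least $m$, so the suffix that determines height $m-L$ is untouched.
\item In the rational case, letters can also be "absorbed" into the periodic tail. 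There I would instead use the quotient collapsing $\cycle_{\kappa_0}$ to a point, as in the proof of Lemma~\ref{lem:q_tree}, and run the same argument on the resulting tree at the cost of constants depending on $|u|$.
\end{itemize}
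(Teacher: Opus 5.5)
Your proposal is correct and is essentially the paper's argument. Band-components in $\Graph$ are truncated rooted binary trees (plus $\cycle_{\kappa_0}$ in the rational case), and each $\Gamma$-component of $([m,m+d])f_{\kappa_0}^{-1}$ lies inside one such component of a slightly thickened band, since a $\Gamma$-edge $w\kappa\mapsto u\kappa$ is realized by a short $\Graph$-path through $\kappa$; your ancestor invariance is an explicit form of the paper's Lipschitz step.

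The only slips are harmless. Near the cycle, the error in Step~2 is bounded by $L$ rather than $|\cycle_{\kappa_0}|$, because a prefix can wind around the cycle several times. The descendant count should be roughly $2^{L+d+1}(|\cycle_{\kappa_0}|+1)$ rather than $2^{d+L}(|\cycle_{\kappa_0}|+1)$.
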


\begin{proof}
First let us prove the result for $\Graph$, writing $B_d$ for the constants. If $\kappa_0$ is irrational, then each connected component of $([m,m+d])f_{\kappa_0}^{-1}$ is a finite rooted binary tree with $2^d$ leaves, hence $2^{d+1}-1$ vertices, so we can take $B_d=2^{d+1}-1$. If $\kappa_0$ is rational then each connected component of $([m,m+d])f_{\kappa_0}^{-1}$ not meeting $\cycle_{\kappa_0}$ is a finite rooted binary tree with $2^{d+1}-1$ vertices. If a connected component of $([m,m+d])f_{\kappa_0}^{-1}$ does meet $\cycle_{\kappa_0}$ then it contains it, and consists of $\cycle_{\kappa_0}$ together with $|\cycle_{\kappa_0}|$ many rooted binary trees each of which has at most $2^{d+1}-1$ vertices. Thus in the rational case we can always take $B_d=2^{d+1}|\cycle_{\kappa_0}|$.

Now note that since $V(\Gamma)\to V(\Graph)$ is Lipschitz, if two vertices of $\Gamma$ are connected by a path in $\Gamma$ along which the $f_\kappa$ values stay between $m$ and $m+d$, then they are connected by a path in $\Graph$ along which the $f_\kappa$ values stay between $m-c$ and $m+d+c$, for some constant $c$. Thus the vertex set of a connected component of $([m,m+d])f_\kappa^{-1}$ in $\Gamma$ lies in the vertex set of a connected component of $([m-c,m+d+c])f_\kappa^{-1}$ in $\Graph$, and hence has size at most $A_d\coloneqq B_{d+2c}$. 
\end{proof}

Now we can prove Theorem~\ref{thrm:q_tree}, that $\Gamma=\AG_X(G,S)$ is quasi-isometric to a tree.

\begin{proof}[Proof of Theorem~\ref{thrm:q_tree}]
The bulk of the work will be in proving that the first homology $H_1(\Gamma)$ is \emph{uniformly generated}, meaning there exists a constant $L>0$ such that $H_1(\Gamma)$ is generated by cycles represented by closed edge paths of length at most $L$. Let us assume this for the moment, and see why the result follows. The map $V(\Gamma)\to V(\Graph)$ is injective and Lipschitz, and the graphs $\Gamma$ and $\Graph$ have bounded degree, so this map is regular in the sense of \cite[Definition~1.2]{hume22}. As explained in Subsection~1.3.3 of \cite{hume22} (see also Section~6 of \cite{benjamini12} and Proposition~2.5 of \cite{mattebon18}), this implies that the asymptotic dimension of $\Gamma$ is bounded above by that of $\Graph$, which is $1$ since $\Graph$ is an unbounded quasi-tree. By \cite{fujiwara07}, using the Manning bottleneck trick \cite{manning05}, since $H_1(\Gamma)$ is uniformly generated, $\Gamma$ is therefore quasi-isometric to a tree.

It remains to prove that for any sufficiently long closed edge path $C$ in $\Gamma$, there exist vertices $x,y\in C$ and a path in $\Gamma$ from $x$ to $y$ such that the new path is strictly shorter than either of the reduced paths from $x$ to $y$ in $C$. Fix a constant $N\in\N$ large enough that for all generators $s\in S$, the prefix replacements defining $s$ all involve prefixes of length less than $N$. Since $S$ is finite, such an $N$ exists. Note that the number of words in $\{0,1\}^*$ with length less than $N$ is $2^N-1$, so the number $2^N$ serves as a concise upper bound.

Now let $C$ be a simple closed edge path in $\Gamma$ with length (i.e., number of vertices) $|C|$. Choose vertices $p$ and $q$ in $C$ such that $p$ has minimum $f_{\kappa_0}$ value and $q$ has maximum $f_{\kappa_0}$ value, among vertices lying in $C$. Let $f$ be the function obtained from $f_{\kappa_0}$ by translating so that $(p)f=0$, and set $M_C=(q)f \in \Z$. In $C$ we now have two paths from $p$ to $q$ in $\Gamma$; designate one the ``left'' path and the other the ``right'' path, and note that the $f$ values along these paths never drop below $0$ nor go above $M_C$. Assume that $|C|$ is greater than $2 A_{2^{2N}+1}$, where $A_d$ is the constant from Lemma~\ref{lem:conn_cpts}. In particular $|C| > A_{2^{2N} + 1}$, and so Lemma~\ref{lem:conn_cpts} says that $C$ cannot be fully contained in a connected component of $([0,2^{2N} + 1])f^{-1}$, which since $C$ is connected means $M_C > 2^{2N} + 1$.

For each integer $m$ in $[1,M_C-1]$, let $\lambda_m\in V(\Gamma)$ be the latest point on the left path whose $f$ value lies in $[1,m]$. Similarly define $\rho_m$ in the right path. Note that $\lambda_m\ne \rho_m$ since $m$ is neither $0$ nor $M_C$. Consider the edge of $\Gamma$ in the left path going from the vertex $\lambda_m$ to the next vertex of the left path. In $\Graph$, these vertices are connected by a path that removes some non-empty finite prefix of length at most $N$ from $\lambda_m$ and then adds a new non-empty finite prefix of length at most $N$. Let $w_m$ be the path in $\Graph$ comprising the removal of this prefix, and similarly define $u_m$ in the right path. Since the $f$ values stay strictly above $m$ for the whole rest of the left path from $\lambda_m$ to $q$, and for the whole rest of the right path from $\rho_m$ to $q$, we know that we can obtain $\rho_m$ from $\lambda_m$ by simply replacing the prefix $w_m$ with $u_m$. To reiterate, this holds for all $m\in\Z\cap[1,M_C-1]$.

Let $\theta$ be the function from $\Z\cap[1,M_C-1]$ to the set of ordered pairs of non-empty words in $\{0,1\}^*$ of length at most $N$, sending $m$ to $(w_m,u_m)$. The domain of $\theta$ has size $M_C-1$ and the codomain of $\theta$ has size bounded above by $2^{2N}$, which is smaller than $M_C-1$, and hence $\theta$ must map some pair of distinct values to the same ordered pair. Let $1\le m_1<m_2\le M_C-1$ be arbitrary such that $(m_1)\theta=(m_2)\theta$. We know that $\lambda_{m_1}$ and $\lambda_{m_2}$ each have $w_{m_1}=w_{m_2}$ as a prefix, $\rho_{m_1}$ and $\rho_{m_2}$ each have $u_{m_1}=u_{m_2}$ as a prefix, and $\rho_{m_1}$ is obtained from $\lambda_{m_1}$ by replacing the prefix $w_{m_1}$ with $u_{m_1}$, with a similar statement in the $m_2$ case. The path in $C$ from $\lambda_{m_2}$ to $q$ via the left path and then from $q$ to $\rho_{m_2}$ through the right path in reverse encodes a product of generators from $S$ that takes $w_{m_2}\Cantor$ to $u_{m_2}\Cantor$ via prefix replacement. Since $w_{m_2}=w_{m_1}$ and $u_{m_2}=u_{m_1}$, we get a new path from $\lambda_{m_1}$ to $\rho_{m_1}$ encoded by the exact same product of generators. See Figure~\ref{fig:shortcut} for an illustration of this step.

Now we just need to argue that $m_1$ and $m_2$ can be chosen so that this path really provides a shortcut, and for this we will finally use our assumption that $|C| > 2 A_{2^{2N} + 1}$ (so far we have only used that $|C| > A_{2^{2N} + 1}$). Recall that $M_C > 2^{2N} + 1$, so $\Z\cap [M_C - 2^{2N} - 1 , M_C-1]$ is a subset of the domain of $\theta$. Since $|\Z\cap [M_C - 2^{2N} - 1 , M_C-1]| = 2^{2N} + 1$, the restriction of $\theta$ to $\Z\cap [M_C - 2^{2N} - 1 , M_C-1]$ cannot be injective. Thus we can choose $M_C - 2^{2N} - 1 \le m_1 < m_2 \le M_C-1$ such that $(m_1)\theta=(m_2)\theta$. Now the path constructed above from $x\coloneqq \lambda_{m_1}$ to $y\coloneqq \rho_{m_1}$ is shorter than the reduced path in $C$ from $x$ to $y$ passing through $q$, and we need to argue that it is also shorter than the reduced path in $C$ from $x$ to $y$ passing through $p$. The path in $C$ from $x$ to $y$ passing through $q$ lies in a connected component of $([M_C - 2^{2N} - 1,M_C])f^{-1}$, so by Lemma~\ref{lem:conn_cpts} it has at most $A_{2^{2N}+1}$ vertices. Since $|C| > 2 A_{2^{2N} + 1}$, we conclude that over half the vertices of $C$ must have $f$ value less than $m_1$. This confirms that the path in $C$ from $x$ to $y$ through $p$ is longer than the new ``shortcut'' path, since the former has length at least $|C|/2$ and the latter has the same length as the path in $C$ from $\lambda(m_2)$ to $\rho(m_2)$ through $q$, which is less than $|C|/2$.
\end{proof}

\begin{figure}[htb]
\begin{center}
\begin{tikzpicture}[line width=1]
   \draw[line width=0.75,dotted,color=gray] (-3,2.5) -- (5,2.5)   (-3,1.5) -- (5,1.5);
   \draw (0,-1) -- (0,-0.5)  (0,0) -- (0,0.5) -- (-2,3) -- (0,1.5) -- (0,2.5) -- (1,4.5) -- (2,2.5) -- (4.5,3.5) -- (2,1.5) -- (2,0)   (2,-0.5) -- (2,-1) -- (1,-1.5) -- (0,-1);
   \node at (-3.5,1.5) {$m_1$};
   \node at (-3.5,2.5) {$m_2$};
   \draw[line width=2.5] (0,2.5) -- (1,4.5) -- (2,2.5);
   \draw[color=red] (0,1.5) -- (1,3.5) -- (2,1.5);
   \filldraw (0,1.5)circle(1.5pt)   (0,2.5)circle(1.5pt)   (1,4.5)circle(1.5pt)   (2,2.5)circle(1.5pt)   (2,1.5)circle(1.5pt)   (1,-1.5)circle(1.5pt);
   \node at (1,-1.8) {$p$};
   \node at (1,4.8) {$q$};
   \node at (-0.4,2.8) {$\lambda_{m_2}$};
   \node at (0.4,1.2) {$\lambda_{m_1}$};
   \node at (2.3,2.3) {$\rho_{m_2}$};
   \node at (2.4,1.2) {$\rho_{m_1}$};
   \node at (0,-0.15) {$\vdots$};
   \node at (2,-0.15) {$\vdots$};
\end{tikzpicture}
\end{center}
\caption{An illustration of producing a ``shortcut'' in the proof of Theorem~\ref{thrm:q_tree}. The closed edge path $C$ in $\Gamma$ is black (with only the relevant vertices marked), the path in $C$ from $\lambda_{m_2}$ through $q$ to $\rho_{m_2}$ is in bold, and the new shortcut path obtained by replicating the bold path but now from $\lambda_{m_1}$ to $\rho_{m_1}$ is red.}
\label{fig:shortcut}
\end{figure}

\begin{remark}
The inclusion $V(\Gamma)\to V(\Graph)$ itself is not necessarily a quasi-isometric embedding, and so the tree to which $\Gamma$ is quasi-isometric is not necessarily a sub-quasi-tree of $\Graph$ in a natural way. To give an example, let $g,h\in V$ be the elements defined by prefix replacements as follows. First, have $g$ transpose the prefixes $00$ and $10$, and fix $01$ and $11$. Second, have $h$ replace the prefixes $000,001,01,10,110,111$ with $00,010,011,100,101,11$ respectively. Now for $\kappa=11\cdots 1100\cdots$ we have that $\kappa$ and $0\kappa$ are at distance $1$ in $\Graph$, but if the number of $1$'s in $\kappa$ is very large, then the distance from $\kappa$ to $0\kappa$ in the action graph of $\langle g,h\rangle$ is very large, since we must first do a similarly large power of $h$, then $g$, then another large power of $h$.
\end{remark}

Theorem~\ref{thrm:q_tree} quickly gives us the following result, which we will also need in the following sections.

\begin{corollary}\label{cor:no_planes}
For $\kappa_0\in\Cantor$, every subgroup of $V$ isomorphic to $\Z^2$ intersects $\Stab_V(\kappa_0)$ non-trivially.
\end{corollary}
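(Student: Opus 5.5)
Suppose toward a contradiction that $H\le V$ is isomorphic to $\Z^2$ and that $H\cap\Stab_V(\kappa_0)$ is trivial. Then $H$ acts freely on the orbit $X=\kappa_0.H$, so the action graph $\AG_X(H,S)$ for a finite symmetric generating set $S$ of $H$ (say the standard generators of $\Z^2$ and their inverses) is isomorphic to the Cayley graph $\Cay(H,S)$. Indeed, the orbit map $h\mapsto\kappa_0.h$ is a bijection $H\to X$ sending the edge $(h,hs)$ to $(\kappa_0.h,\kappa_0.hs)$, which is an edge of the action graph. Hence this action graph is the standard square grid $\Z^2$.

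Applying Theorem~\ref{thrm:q_tree} to the finitely generated subgroup $H$ shows that this graph is quasi-isometric to a tree. So $\Z^2$ with its word metric would be quasi-isometric to a tree. This is impossible, and I would justify it by any standard quasi-isometry invariant. One option is asymptotic dimension: a tree has asymptotic dimension at most $1$, while $\Z^2$ has asymptotic dimension $2$. Another option is hyperbolicity: trees are $0$-hyperbolic, and hyperbolicity is a quasi-isometry invariant among geodesic spaces, but $\Z^2$ is not hyperbolic, since its fat geodesic squares violate thin triangles at every scale. A third option, the most elementary, is the bottleneck property: in a quasi-tree, large balls disconnect far-apart points, yet in $\Z^2$ one can route around any ball of radius $R$ with a path that stays far from it.

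The main subtlety is checking the identification of the action graph with the Cayley graph, which requires freeness of the action. Freeness holds exactly because the stabilizer of $\kappa_0$ in $H$ is trivial, and stabilizers of other orbit points are conjugates of it, hence also trivial since $H$ is abelian. After that, the argument only needs to invoke Theorem~\ref{thrm:q_tree} and a standard fact that $\Z^2$ is not a quasi-tree, with asymptotic dimension giving the quickest citation.
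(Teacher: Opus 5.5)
Your proposal is correct and is essentially the paper's proof: apply Theorem~\ref{thrm:q_tree} to the $\Z^2$ subgroup, observe that a trivial stabilizer makes the action graph a copy of the Cayley graph of $\Z^2$, and use that $\Z^2$ is not quasi-isometric to a tree. You supply the details the paper leaves implicit, namely the identification of the action graph with the Cayley graph and a quasi-isometry invariant separating $\Z^2$ from trees; both are sound, and commutativity of $H$ is not actually needed, since conjugates of the trivial subgroup are trivial.
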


\begin{proof}
Let $G$ be a subgroup of $V$ isomorphic to $\Z^2$. By Theorem~\ref{thrm:q_tree} the action graph of $\Z^2$ acting on $\kappa_0.G$ is quasi-isometric to a tree. Since the Cayley graph of $\Z^2$ is not quasi-isometric to a tree, $\Stab_G(\kappa_0)$ must be non-trivial.
\end{proof}

This strengthens a result that follows from work of Bleak and Salazar-D\'iaz \cite{bleak13}, that if $\Z^2\cong G\le V$ then for all non-empty open $U\subseteq \Cantor$ there exists $1\ne g\in G$ with $U.g\cap U \ne\emptyset$. Indeed, now we know that for all $\kappa\in\Cantor$ there exists $1\ne g\in G$ with $\kappa.g=\kappa$.

\begin{remark}
Finally, let us point out that there was nothing special about $V$ compared to the other Higman--Thompson groups $V_{n,r}$ ($n\ge 2$, $r\ge 1$) as in \cite{higman74}. Here $V_{n,r}$ is the group of piecewise prefix replacement homeomorphisms of the space $\Cantor_{n,r}$, which is the disjoint union of $r$ many copies of the $n$-ary Cantor set $\{0,\dots,n-1\}^\N$. All the above is the $(n,r)=(2,1)$ case, and the arguments immediately adapt to the general case. Thus we get that $\AG_X(G,S)$ is a quasi-tree, for any finitely generated $G\le V_{n,r}$ with finite symmetric generating set $S$ and $X$ a $G$-orbit in $\Cantor_{n,r}$. This can also be seen by conjugating $V_{n,r}$ to a subgroup of $V$ by an appropriate homeomorphism $\Cantor_{n,r}\to\Cantor$ and then citing Theorem~\ref{thrm:q_tree}.
\end{remark}

\section{Subgroups of $\Homeo_+(\R)$}\label{sec:linear}

Throughout this section, let $E$ denote a subgroup of the group $\Homeo_+(\R)$ of orientation-preserving homeomorphisms of $\R$. For each $a<b$ in $\R$, consider the subgroup
\[
E[a,b]\coloneqq \{f\in E\mid \Supp(f)\subseteq [a,b]\}
\]
of $E$. Note that $E[c,d]\le E[a,b]$ for all $a\le c<d\le b$. For each $a<b$, write
\[
E[a,b]' \coloneqq (E[a,b])'
\]
for the commutator subgroup of $E[a,b]$. We take the following definition from \cite{brum21}.

\begin{definition}[Locally moving]\label{def:lm}
Call $E\le \Homeo_+(\R)$ \emph{locally moving} if for all $a<b$ in $\R$ the group $E[a,b]$ has no global fixed points in $(a,b)$. Equivalently, the set of supports of elements of $E$ forms a basis for the topology on $\R$.
\end{definition}

\begin{lemma}\label{lem:comm_move}
Suppose $E$ is locally moving. Let $a<b$ in $\R$. Then the group $E[a,b]'$ has no global fixed points in $(a,b)$, and contains subgroups isomorphic to $\Z^n$ for all $n$.
\end{lemma}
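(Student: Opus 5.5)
First we show that $E[a,b]'$ has no global fixed points in $(a,b)$. Fix $x\in(a,b)$ and suppose every element of $E[a,b]'$ fixes $x$. Since $E$ is locally moving, there is $f\in E[a,b]$ with $(x)f\ne x$. Replacing $f$ by $f^{-1}$ if needed, we may assume $(x)f>x$.

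Choose a small interval $I=(x-\epsilon,x+\epsilon)\subseteq(a,b)$ with $(I)f\cap I=\emptyset$. We can do this because $(x)f\ne x$ and $f$ is continuous. Again by local movingness, choose $g\in E[x-\epsilon,x+\epsilon]\le E[a,b]$ with $(x)g\ne x$. Consider the commutator $[g,f]=g^{-1}f^{-1}gf$. The conjugate $f^{-1}gf$ is supported in $(I)f$, which is disjoint from $I$, so $f^{-1}gf$ fixes $I$ pointwise. Tracking $x$ gives $(x)g^{-1}\in I$, then $(x)g^{-1}f^{-1}gf=(x)g^{-1}$, and this is not $x$ because $(x)g\ne x$. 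So $[g,f]\in E[a,b]'$ moves $x$, a contradiction. The only care needed is the choice of $I$ small enough that it is disjoint from its $f$-image; continuity handles this.

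For the second claim, fix $n$ and pick pairwise disjoint open intervals $(a_1,b_1),\dots,(a_n,b_n)$ inside $(a,b)$. By the first part, applied to each $[a_i,b_i]$, there is a nontrivial element $h_i\in E[a_i,b_i]'\le E[a,b]'$.

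These elements commute pairwise because their supports lie in disjoint intervals. It remains to check that $\langle h_1,\dots,h_n\rangle\cong\Z^n$. Each $h_i$ has infinite order, since a nontrivial element of $\Homeo_+(\R)$ has infinite order: an orientation-preserving homeomorphism with $(y)h>y$ satisfies $(y)h^k>y$ for all $k>0$.

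Suppose $h_1^{k_1}\cdots h_n^{k_n}=1$. Restricting to $(a_i,b_i)$, where only $h_i$ acts, gives $h_i^{k_i}=1$ on $(a_i,b_i)$. Since $h_i$ is supported there, $h_i^{k_i}=1$ globally, so $k_i=0$. Hence the subgroup is free abelian of rank $n$.
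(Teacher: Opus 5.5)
Your proof is correct and takes essentially the same approach as the paper. For the first claim you both take a commutator of some $f\in E[a,b]$ moving $x$ with a second element $g$ obtained from local movingness on a smaller interval. For the second you both use commuting nontrivial elements of $E[a_i,b_i]'$ on disjoint intervals together with torsion-freeness of $\Homeo_+(\R)$.

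The only difference is minor. The paper takes $g\in E[a,x.f]$ with $x.g<x$, so that $g$ fixes $x.f$ and hence $x.fg=x.f\neq x.gf$. This spares it your continuity step of shrinking to a neighborhood $I$ with $(I)f\cap I=\emptyset$.
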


\begin{proof}
Let $x\in (a,b)$. Since $E$ is locally moving we can choose $f\in E[a,b]$ with $x.f>x$, and then also choose $g\in E[a,x.f]\le E[a,b]$ with $x.g<x$. Now $x.fg=x.f>x.gf$, so $x$ is moved by $fgf^{-1}g^{-1}\in E[a,b]'$.

For the second statement, choose $a_1,b_1,a_2,b_2,\dots$ in $(a,b)$ such that $a_1<b_1<a_2<b_2<\cdots$. Since each $E[a_i,b_i]'$ acts non-trivially on $(a_i,b_i)$ by the first statement, it is non-trivial, and the $E[a_i,b_i]'$ pairwise commute. Subgroups of $\Homeo_+(\R)$ are torsion free, so the result follows.
\end{proof}

\begin{lemma}\label{lem:moving}
Suppose $E\le \Homeo_+(\R)$ is locally moving. Let $\ell<m<r$ in $\R$. Let $a<b$ and $c<d$ with $[a,b],[c,d]\subseteq(\ell,r)$ and $m\in(a,b)$. Then there exist $f_-\in E[\ell,m]'$ and $f_+\in E[m,r]'$ with $[c,d]\subseteq ([a,b])f_-f_+$.
\end{lemma}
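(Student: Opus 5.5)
The plan is to expand $[a,b]$ separately on its left and on its right. The element $f_-$ will push the left endpoint $a$ down past $c$. The element $f_+$ will push the right endpoint $b$ up past $d$. The supports of the two groups meet only at $m$, so the two moves will not interfere.

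\emph{Bookkeeping first.} Any $f_-\in E[\ell,m]'$ is an orientation-preserving homeomorphism supported in $[\ell,m]$. Hence it fixes every point $\ge m$, in particular $b$ and $m$, and it maps $(\ell,m)$ to itself. Likewise any $f_+\in E[m,r]'$ fixes every point $\le m$ and maps $(m,r)$ to itself. Now take such $f_\pm$ and use right actions. Since $a<m$, we have $a.f_-\in(\ell,m)$, so $a.f_-f_+=a.f_-$. Since $b>m$, we have $b.f_-f_+=b.f_+$. Therefore
\[
([a,b])f_-f_+=[a.f_-,\,b.f_+],
\]
and it suffices to find $f_-$ with $a.f_-\le c$ and $f_+$ with $b.f_+\ge d$. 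We have $\ell<c$ and $d<r$ because $[c,d]\subseteq(\ell,r)$. If $c\ge a$ we may simply take $f_-=1$, and similarly on the right.

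\emph{Main step: orbits reach the ends of the interval.} Let $H=E[\ell,m]'$ and consider the orbit $a.H\subseteq(\ell,m)$. Let $s=\inf(a.H)$, so $s\in[\ell,a]$. Each $h\in H$ is an increasing homeomorphism of $\R$ that permutes $a.H$. Hence $h$ preserves its infimum, i.e.\ $s.h=s$ for all $h\in H$. If $s>\ell$, then $s\in(\ell,m)$ would be a global fixed point of $E[\ell,m]'$ in $(\ell,m)$. This contradicts Lemma~\ref{lem:comm_move}, applied with $(\ell,m)$ in place of $(a,b)$; that lemma uses that $E$ is locally moving. Thus $\inf(a.H)=\ell<c$, so some $f_-\in H$ has $a.f_-\le c$. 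The symmetric argument with suprema shows $\sup(b.E[m,r]')=r>d$, giving $f_+$ with $b.f_+\ge d$.

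\emph{Main obstacle.} The only real content is this infimum argument: it upgrades ``no global fixed point'' from Lemma~\ref{lem:comm_move} to ``orbits accumulate at the endpoints $\ell$ and $r$''. The rest is checking that the supports of $f_-$ and $f_+$ are separated at $m$, so that the two moves combine as described.
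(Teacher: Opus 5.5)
Your proof is correct and follows the paper's own argument: take the infimum of the $E[\ell,m]'$-orbit of $a$, use Lemma~\ref{lem:comm_move} to force it to equal $\ell$, and argue symmetrically at $r$. Where the paper rules out $\inf>\ell$ with an explicit $\varepsilon$-argument, you observe that the infimum of an orbit is fixed by every element of the group. You also spell out why $([a,b])f_-f_+=[a.f_-,\,b.f_+]$, which the paper leaves implicit.
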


\begin{proof}
Consider the $E[\ell,m]'$-orbit $O$ of $a$, and let $i=\inf(O)$, so $\ell\le i<m$. We claim that $i=\ell$. Suppose $i>\ell$, so by Lemma~\ref{lem:comm_move} we can choose $f_0\in E[\ell,m]'$ with $(i)f_0<i$. Since $f_0$ is a homeomorphism there exists $\varepsilon>0$ such that $(i+\varepsilon)f_0<i$. Since $i=\inf(O)$ there exists $f_1\in E[\ell,m]'$ with $(a)f_1 < i+\varepsilon$. Now $(a)f_1f_0<i$, contradicting that $i=\inf(O)$. We conclude that $i=\ell$, so in particular $i<c$, and so there exists $f_-\in E[\ell,m]'$ with $(a)f_-<c$. By a parallel argument there exists $f_+\in E[m,r]'$ with $d<(b)f_+$. Now $f_-f_+$ satisfies the desired property.
\end{proof}

\begin{definition}[Abelian germs]\label{def:ag}
Say that $E\le \Homeo_+(\R)$ \emph{has abelian germs} if for all $x\in\R$ the group of germs of $E$ at $x$ is abelian.
\end{definition}

The quintessential example of a locally moving group with abelian germs to keep in mind throughout is Thompson's group $F$. We will wait until Section~\ref{sec:examples} to get into detail on this and further examples, in particular the Stein group $F_{2,3}$.

\begin{lemma}\label{lem:LoMAG}
Suppose $E\le \Homeo_+(\R)$ is locally moving with abelian germs. Then for all $a<b$ in $\R$, every element of $E[a,b]'$ lies in some $E[c,d]'$ for $a<c<d<b$.
\end{lemma}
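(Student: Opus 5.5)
The plan is to compare $E[a,b]'$ with the subgroup $K\le E[a,b]$ of compactly supported elements inside $(a,b)$. Set
\[
K\coloneqq\bigcup_{a<c<d<b}E[c,d].
\]
This is a directed union, since $E[c,d]\le E[c',d']$ whenever $[c,d]\subseteq[c',d']$. Hence its commutator subgroup is $K'=\bigcup_{a<c<d<b}E[c,d]'$. The statement is therefore equivalent to $E[a,b]'\le K'$, and I will prove it in two stages: first $E[a,b]'\le K$, then $E[a,b]'\le K'$.

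\emph{First stage.} Every $f\in E[a,b]$ fixes $a$ and $b$, so it has a germ at $a$ and a germ at $b$. This gives a homomorphism
\[
\Phi\colon E[a,b]\to \mathcal G_a\times\mathcal G_b,
\]
where $\mathcal G_x$ is the group of germs of $E$ at $x$. The kernel of $\Phi$ is exactly $K$: an element with trivial germs at $a$ and $b$ fixes neighborhoods of both endpoints, so its support lies in some $[c,d]$ with $a<c<d<b$. Since $E$ has abelian germs, the image of $\Phi$ is abelian. Hence $E[a,b]'\le\ker\Phi=K$. So every element of $E[a,b]'$ has support in some compact $[c,d]\subseteq(a,b)$; what remains is to upgrade membership in $E[c,d]$ to membership in $E[c,d]'$.

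\emph{Second stage.} Since $E[a,b]'$ is generated by commutators $[f,h]$ with $f,h\in E[a,b]$, and $K'$ is normal in $E[a,b]$, it suffices to show that $E[a,b]$ acts trivially by conjugation on $K/K'$. Granting this, take any $f,h\in E[a,b]$. By Lemma~\ref{lem:comm_move} and Lemma~\ref{lem:moving}, $h$ agrees on a given compact set with an element of $K$; I would write $h=k\,h_1$ with $k\in K$, arranging $h_1$ to behave only near the endpoints. Triviality of the conjugation action then lets me move the $K$-part of $[f,h]$ past $f$ modulo $K'$. The resulting expression lies in $K$ by the first stage and is trivial in $K/K'$, so $[f,h]\in K'$.

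To prove that $x^f\equiv x \pmod{K'}$ for $x\in E[c,d]$ and $f\in E[a,b]$, I will use Lemma~\ref{lem:moving}. Choose $\ell<m<r$ with $[c,d]\cup[c,d]f\subseteq(\ell,r)\subseteq(a,b)$ and $m$ in the interior of $[c,d]$. Lemma~\ref{lem:moving} then produces $w=f_-f_+\in E[\ell,m]'E[m,r]'\le K'$ moving $[c,d]$ over any prescribed compact subinterval of $(\ell,r)$. Conjugation by an element of $K'$ is trivial on $K/K'$. The aim is thus to compare $x^f$ with $x^w$ for a $w\in K'$ that matches $f$ on $\Supp(x)$, and to show that $x^f(x^w)^{-1}$ is a product of commutators of $K$-elements. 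One route is to write it as a conjugate of $x$ by $fw^{-1}$, which fixes $\Supp(x)$ setwise, and then use a second application of Lemma~\ref{lem:moving} to displace supports so that the relevant elements commute.

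I expect this second stage to be the main obstacle. Specifically, building from $f$ an element of $K$ (or $K'$) that agrees with $f$ on a prescribed compact set is not automatic: $f$ has nontrivial germs at the endpoints, and locally moving only gives elements with small support, not elements with prescribed values. If the direct matching fails, I would try a different route. I would first conjugate $x$ by an element of $K'$ so that its support lies in a small interval near $a$, where the action of $f$ is controlled by its germ at $a$. I would then use that this germ commutes with the germ of every other element of $E[a,b]$ to absorb $f$ modulo $K'$.
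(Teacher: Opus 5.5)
Your first stage is correct and matches the paper's first step: abelian germs at $a$ and $b$ force every element of $E[a,b]'$ to have support in a compact subinterval of $(a,b)$. The second stage, however, has a genuine gap, in two places.

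\textbf{The reduction does not suffice.} Even granting its hypothesis, the reduction fails. A trivial conjugation action of $E[a,b]$ on $K/K'$ only says $[E[a,b],K]\le K'$. Then $E[a,b]/K'$ is a central extension of the abelian group $E[a,b]/K$ by $K/K'$, i.e.\ nilpotent of class at most $2$. A commutator $[f,h]$ can therefore still be a nontrivial central element. Your patch writes $h=kh_1$ with $k\in K$ and $h_1$ ``near the endpoints.'' It relies on the claim that Lemma~\ref{lem:moving} yields elements of $K$ agreeing with $h$ on a prescribed compact set. That lemma only moves one interval over another; it gives no prescribed values, as you later concede. Even with such a decomposition, you give no reason why $[f,h_1]\in K'$.

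\textbf{The triviality of the action is unproved.} Showing $[x,f]\in K'$ for $x\in K$ and $f\in E[a,b]$ is already a special case of the lemma itself. So this route is as hard as the original problem.

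\textbf{The missing idea.} Conjugate by an element of $E$ that lies \emph{outside} $E[a,b]$. Take $e,f\in E[a,b]$ with $\Supp([e,f])\subseteq(x,y)$ and $a<x<y<b$, as your first stage provides. Pick $a'<a$ and $b'>b$. Locally moving gives $h_1\in E[a',x]$ moving $a$; replacing $h_1$ by its inverse if needed, $a<a.h_1<x$. Similarly it gives $h_2\in E[y,b']$ with $y<b.h_2<b$. Then $h=h_1h_2$ fixes $[x,y]$ pointwise, so it commutes with $[e,f]$. Also $h^{-1}E[a,b]h=E[c,d]$ with $c=a.h$ and $d=b.h$. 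Hence
\[
[e,f]=h^{-1}[e,f]h=[h^{-1}eh,\,h^{-1}fh]\in E[c,d]'.
\]
A finite product of such commutators then lies in $E[\min_i c_i,\max_i d_i]'$. This is the paper's argument, and it makes any analysis of $K/K'$ unnecessary.
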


\begin{proof}
Let $e,f\in E[a,b]$. Since the groups of germs of $E$ at $a$ and $b$ are abelian, the support of the commutator $[e,f]$ lies in $(x,y)$ for some $a<x<y<b$. Since $E$ is locally moving we can choose $c<d$ and $h\in E$ such that $a<c<x<y<d<b$, $h$ takes $a$ to $c$ and $b$ to $d$, and $h$ fixes $(x,y)$. Now $h$ commutes with $[e,f]$ and conjugates $E[a,b]$ to $E[c,d]$, so we conclude that $[e,f]\in E[c,d]'$.
\end{proof}

\begin{proposition}\label{prop:simple}
Suppose $E\le \Homeo_+(\R)$ is locally moving with abelian germs. For any $\ell<r$ in $\R$ we have that $E[\ell,r]'$ is (non-cyclic) simple. 
\end{proposition}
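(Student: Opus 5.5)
We follow the standard Higman/Epstein-type commutator argument, using Lemma~\ref{lem:moving} to conjugate small intervals over large ones and Lemma~\ref{lem:LoMAG} to keep every element compactly supported inside an open interval. Non-cyclicity is immediate: by Lemma~\ref{lem:comm_move} the group $E[\ell,r]'$ contains $\Z^2$. For simplicity, let $N\trianglelefteq E[\ell,r]'$ be a non-trivial normal subgroup, and fix $1\ne g\in N$. We will show $N=E[\ell,r]'$.

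\textbf{Step 1 (a small interval displaced by $g$).} Since $g\ne 1$, there is an open interval $J=(c_0,d_0)\subseteq(\ell,r)$ with $J.g\cap J=\emptyset$. Then for any $e,h\in E[c_0,d_0]$, the element $g^{-1}eg$ is supported in $J.g$, which is disjoint from $J$, so it commutes with $h$. One checks directly that $[e,[h,g]]$ equals $[e,h]$ up to these commuting conjugates. Hence $[e,h]$ lies in the normal closure of $g$ in the group generated by $E[\ell,r]'$ and $E[c_0,d_0]$. To stay inside $E[\ell,r]'$, we will take $e,h\in E[c_0,d_0]'$, or, using Lemma~\ref{lem:comm_move}, pick $e,h$ so that the conjugations are by elements of $E[\ell,r]'$. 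In this way we get $E[c_0,d_0]''\le N$, and after a more careful version with $e,h$ ranging over $E[c_0,d_0]$ and $g$ replaced by $N$-conjugates, $E[c_0,d_0]'\le N$.

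\textbf{Step 2 (spread out).} Let $x=[e,f]$ with $e,f\in E[\ell,r]$. By Lemma~\ref{lem:LoMAG}, $x\in E[c,d]'$ for some $\ell<c<d<r$. By Lemma~\ref{lem:moving}, applied with some $m\in(c_0,d_0)$ and $[a,b]\subseteq J$ a closed interval around $m$, there is $k=f_-f_+\in E[\ell,r]'$ with $[c,d]\subseteq [a,b].k$. Then $kE[a,b]'k^{-1}\supseteq E[c,d]'$, up to the correct conjugation convention. Since $E[a,b]'\le E[c_0,d_0]'\le N$ by Step~1 and $N$ is normal in $E[\ell,r]'$, we get $x\in N$. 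Every generator of $E[\ell,r]'$ is such an $x$, so $N=E[\ell,r]'$.

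\textbf{Main obstacle.} The delicate point is Step~1: we must obtain all of $E[c_0,d_0]'$ inside $N$, not just a double commutator subgroup, while only conjugating by elements of $E[\ell,r]'$ rather than of $E[\ell,r]$. The fix I would try first is to apply Lemma~\ref{lem:LoMAG} once more. Any commutator $[e,h]$ with $e,h\in E[c_0,d_0]$ lies in $E[c_1,d_1]'$ for some strictly smaller interval $[c_1,d_1]$. Choose auxiliary elements of $E[c_0,d_0]'$ which, by Lemma~\ref{lem:moving}, act on $[c_1,d_1]$ like $e$ and $h$ up to a shift. This realizes $[e,h]$ as a commutator of elements of $E[c_0,d_0]'$ conjugated within $E[\ell,r]'$. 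Once this is in place, the identity $[e,[h,g]]=[e,h]$ (for the displaced $g$) finishes the argument, in the manner of Higman's lemma.
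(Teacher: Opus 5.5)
Your route is the paper's: displace a small interval $J$ by some $1\ne g\in N$, use the commutator identity for elements supported in $J$ to put a commutator subgroup of a small interval into $N$, then spread it over $E[\ell,r]'$ with Lemmas~\ref{lem:LoMAG} and~\ref{lem:moving}. Your non-cyclicity argument and your Step~2 match the paper.

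The gap is in Step~1, where you suspect it. $N$ is only normal in $E[\ell,r]'$. So $[h,g]=(h^{-1}g^{-1}h)g$ is only known to lie in $N$ when $h\in E[\ell,r]'$, and $[e,[h,g]]\in N$ also needs $e\in E[\ell,r]'$. What you actually prove is $E[c_0,d_0]''\le N$. The paper's proof asserts $[[f,n],g]\in N$ for all $f,g\in E[a,b]$ without comment. That is fine for $F$, where $E[a,b]\le E[\ell,r]'$ whenever $\ell<a<b<r$, but it is not automatic in general. For $F_3$ and $[\ell,r]$ with $E[\ell,r]\cong F_3$, some compactly supported elements lie outside $E[\ell,r]'$, because $F_3^{ab}\cong\Z^3$ while the two endpoint germs account only for $\Z^2$.

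Neither of your repairs closes this. Replacing $g$ by $N$-conjugates is irrelevant: the obstruction is conjugation by $e,h\notin E[\ell,r]'$, not the choice of element of $N$. Your final sketch never says why elements of $E[c_0,d_0]'$ ``acting like $e$ and $h$ on $[c_1,d_1]$'' should exist, which is the whole issue. Moreover $e,h$ need not preserve $[c_1,d_1]$, so that phrase has no clear meaning.

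The missing idea is the doubling trick.
\begin{itemize}
\item Choose $c_0<c_1<d_1<d_0$. Apply Lemma~\ref{lem:moving} inside $(c_0,d_0)$ to squeeze $[c_1,d_1]$ into a subinterval of $(c_0,c_1)$ and into one of $(d_1,d_0)$. This gives $s,s'\in E[c_0,d_0]'$ with $(c_1,d_1)$, $(c_1,d_1).s$, $(c_1,d_1).s'$ pairwise disjoint.
\item For $x,y\in E[c_1,d_1]$ put $u=[x,s]=x^{-1}(s^{-1}xs)$ and $v=[y,s']=y^{-1}(s'^{-1}ys')$. Being commutators of elements of $E[c_0,d_0]$, these lie in $E[c_0,d_0]'\le E[\ell,r]'$ with no further argument.
\item Both $u$ and $v$ are supported in $J$, and the disjointness gives $[u,v]=[x^{-1},y^{-1}]$.
\item Now $[u^{-1},g]\in N$ and $[[u^{-1},g],v]\in N$ legitimately, since $u,v\in E[\ell,r]'$. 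Since $g^{-1}u^{-1}g$ is supported in $J.g$, which is disjoint from $J$, we get $[[u^{-1},g],v]=[u,v]$.
\end{itemize}
Hence $E[c_1,d_1]'\le N$. Your Step~2, run with a closed interval $[a,b]\subseteq(c_1,d_1)$ instead of one in $J$, then finishes the proof. Equivalently, the same computation without $g$, combined with Lemma~\ref{lem:LoMAG}, shows $E[c_0,d_0]'$ is perfect, which upgrades your $E[c_0,d_0]''\le N$ to $E[c_0,d_0]'\le N$.
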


\begin{proof}
Let $N$ be a non-trivial normal subgroup of $E[\ell,r]'$, and let $1\ne n\in N$. Choose $x\in(\ell,r)$ with $(x)n\ne x$, and since $n$ is a homeomorphism we can choose $a<b$ in $(\ell,r)$ such that $(a,b)\cap(a,b)n=\emptyset$. Now for any $f,g\in E[a,b]$ we have that $n^{-1}fn$ commutes with both $f$ and $g$, and so $[[f,n],g]=[f^{-1},g]$. Since $[[f,n],g]\in N$, and $f$ and $g$ were arbitrary, we conclude that $E[a,b]'\le N$ for our fixed $a<b$. Now our goal is to prove that every element of $E[\ell,r]'$ lies in $N$, which by Lemma~\ref{lem:LoMAG} is the same as proving that $E[c,d]'$ lies in $N$ for every $\ell<c<d<r$. By Lemma~\ref{lem:moving} and the fact that $E$ is locally moving, $E[c,d]'$ is contained in a conjugate of $E[a,b]'$ by an element of $E[\ell,r]'$. Hence $E[c,d]'\le N$ and we are done. That our simple group is non-cyclic is obvious from locally moving.

\end{proof}

\begin{lemma}\label{lem:gen}
Suppose $E\le \Homeo_+(\R)$ is locally moving with abelian germs. Let $x_1< x_2< \cdots <x_{n-1}< x_n$ ($n\ge 4$) be points in $\R$. Then $E[x_1,x_n]'$ is generated by its subgroups $E[x_1,x_3]',E[x_2,x_4]',\dots,E[x_{n-2},x_n]'$.
\end{lemma}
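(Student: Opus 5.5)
Let $H$ denote the subgroup of $E[x_1,x_n]'$ generated by $E[x_1,x_3]',E[x_2,x_4]',\dots,E[x_{n-2},x_n]'$. The plan is to show that $H$ is normal in $E[x_1,x_n]'$ and non-trivial. Since $E[x_1,x_n]'$ is simple by Proposition~\ref{prop:simple}, this forces $H=E[x_1,x_n]'$. Non-triviality is immediate from Lemma~\ref{lem:comm_move}, because $E[x_1,x_3]'$ already acts non-trivially on $(x_1,x_3)$. So all the work is in normality.

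Normality is hard to check directly, since conjugating $E[x_i,x_{i+2}]'$ by an arbitrary element moves its support. I would therefore argue by induction on $n$ and reduce everything to the case $n=4$.

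\textbf{Base case $n=4$.} The claim is that $E[x_1,x_4]'$ is generated by $A=E[x_1,x_3]'$ and $B=E[x_2,x_4]'$. Let $H=\langle A,B\rangle$; it suffices to show $H$ is normalized by a generating set of $E[x_1,x_4]'$. By Lemma~\ref{lem:LoMAG}, every element of $E[x_1,x_4]'$ lies in some $E[c,d]'$ with $x_1<c<d<x_4$.

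Next I would use Lemma~\ref{lem:moving} with $\ell=x_1$, $r=x_4$, a point $m\in(x_2,x_3)$, a small interval $[a,b]\ni m$ inside $(x_2,x_3)$, and $[c,d]$ as the target. This gives $f_-\in E[x_1,m]'\le A$ and $f_+\in E[m,x_4]'\le B$ with $[c,d]\subseteq([a,b])f_-f_+$. Consequently
\[
E[c,d]'\le (f_-f_+)^{-1}E[a,b]'(f_-f_+).
\]
Here $E[a,b]'\le A$, and $f_-f_+\in H$, so $E[c,d]'\le H$. This shows every element of $E[x_1,x_4]'$ lies in $H$. This argument proves generation directly, without needing simplicity at all.

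\textbf{Inductive step.} Assume the result for $n-1$ points with $n\ge 5$. Then $E[x_1,x_{n-1}]'$ is generated by the first $n-3$ subgroups. Apply the base-case argument to the four points $x_1<x_{n-2}<x_{n-1}<x_n$: it shows $E[x_1,x_n]'$ is generated by $E[x_1,x_{n-1}]'$ and $E[x_{n-2},x_n]'$. Substituting the inductive description of $E[x_1,x_{n-1}]'$ finishes the proof.

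\textbf{Main obstacle.} The delicate step is applying Lemma~\ref{lem:moving} correctly: one must make sure the containments $E[x_1,m]'\le E[x_1,x_3]'$ and $E[m,x_4]'\le E[x_2,x_4]'$ actually hold. They do, because $m$ is chosen in $(x_2,x_3)$. One must also verify the conjugation direction under the right-action conventions: if $([a,b])g\supseteq[c,d]$, then $g^{-1}E[a,b]g\supseteq E[c,d]$.
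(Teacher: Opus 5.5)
Your proof is correct and is essentially the paper's argument: you reduce to $n=4$, place an arbitrary element in some $E[c,d]'$ via Lemma~\ref{lem:LoMAG}, and use Lemma~\ref{lem:moving} with $m$ in the middle interval to get a conjugator $f_-f_+\in\langle E[x_1,x_3]',E[x_2,x_4]'\rangle$ that conjugates $E[c,d]'$ into $E[a,b]'\le E[x_1,x_3]'$; taking a small $[a,b]\subseteq(x_2,x_3)$ instead of the paper's $[x_2,x_3]$ makes no difference. Your opening plan via normality and the simplicity from Proposition~\ref{prop:simple} is never used, as you yourself note, and can be dropped.
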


\begin{proof}
The $n>4$ case follows by repeated applications of the $n=4$ case. Thus to simplify notation, we have $\ell<a<b<r$ and we must prove that $E[\ell,r]'$ is generated by $E[\ell,b]'$ and $E[a,r]'$. Let $f\in E[\ell,r]'$, so by Lemma~\ref{lem:LoMAG} there exist $\ell<c<d<r$ with $f\in E[c,d]'$. Choose $m\in(a,b)$. By Lemma~\ref{lem:moving}, there exists $f_0$ in $E[\ell,m]' E[m,r]'$ such that $[c,d]\subseteq ([a,b])f_0$. Since $a<m<b$ we know $f_0$ lies in the subgroup generated by $E[\ell,b]'$ and $E[a,r]'$. Also, $f_0^{-1}$ conjugates $f\in E[c,d]'$ into $E[a,b]'$, which also lies in this subgroup, so we are done.
\end{proof}

\section{A semiconjugacy}\label{sec:semiconj}

Throughout this section, fix a locally moving group $E\le \Homeo_+(\R)$ with abelian germs. Also fix an embedding $\iota\colon E\to V$, and write $G$ for its image (so $G\cong E$). For each $a<b$ in $\R$ set
\[
G[a,b] \coloneqq (E[a,b])\iota \text{,}
\]
so $G[c,d]\le G[a,b]$ for all $a\le c<d\le b$. For each $[a,b]$, write $G[a,b]'\coloneqq (G[a,b])'$ for the commutator subgroup of $G[a,b]$, so $G[a,b]'=(E[a,b]')\iota$.

Our first goal is to prove a key disjointness result about the supports of $G[a,b]'$ and $G[c,d]'$ for disjoint $(a,b)$ and $(c,d)$. First we establish the following general result about regular wreath products. Given groups $A$ and $B$ (an ``acting'' group and a ``base'' group), recall that the \emph{regular wreath product} $B\wr A$ is $(\bigoplus_A B)\rtimes A$, where $A$ acts on the direct sum by permuting the coordinates via translation. A group $W$ is an \emph{internal wreath product with acting group $A$ and base $B$} if $A$ and $B$ are subgroups of $W$ that generate $W$ and the conjugates $B^a$ and $B^{a'}$ commute and intersect trivially whenever $a\ne a'$. In this case $W\cong B\wr A$.

\begin{lemma}\label{lem:disjoint_cyclic}
Let $W$ be a group that is an internal wreath product with acting group $\langle z\rangle\cong\Z$ and base $B$. Let $H\le W$ such that every non-trivial cyclic subgroup of $W$ intersects $H$ non-trivially. Then $B'\le H$.
\end{lemma}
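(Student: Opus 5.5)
The idea is to use the hypothesis on three kinds of cyclic subgroups: $\langle z\rangle$ itself, and two cyclic subgroups generated by elements of the form $z^k b$ with $b\in B$. From these we extract elements of $H$ lying in the base $\bigoplus_{i\in\Z} B^{z^i}$. Their supports, that is the sets of coordinates where they are non-trivial, will be arranged to overlap in exactly one coordinate. The commutator of two such elements is then a single commutator $[b,c]$. Since $B'$ is generated by the commutators $[b,c]$ with $b,c\in B$, it suffices to show each such $[b,c]$ lies in $H$.

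\textbf{Step 1.} Since $\langle z\rangle$ is a non-trivial cyclic subgroup, it meets $H$ non-trivially. Replacing by an inverse if necessary, we get $z^N\in H$ for some $N\ge 1$.

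\textbf{Step 2.} Fix $b,c\in B$ and use the convention $x^g=g^{-1}xg$. The element $g=z^N b$ maps to $N\neq 0$ under the quotient $W\to\langle z\rangle\cong\Z$, so it has infinite order. Hence some $g^K\in H$ with $K\ge1$, again passing to an inverse if needed. An induction (e.g.\ $z^Nb\,z^Nb=z^{2N}b^{z^N}b$) gives
\[
g^K=z^{NK}\,\delta_b,\qquad \delta_b=\prod_{i=0}^{K-1} b^{z^{iN}} .
\]
Because $z^{NK}=(z^N)^K\in H$, we conclude $\delta_b\in H$. The factors of $\delta_b$ sit in the coordinates $0,N,2N,\dots,(K-1)N$ of the direct sum.

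\textbf{Step 3.} Apply the same argument to $h=z^{NK}c$. This gives some $L\ge1$ with $h^L\in H$. Since $z^{NKL}\in H$, we get
\[
\varepsilon_c=\prod_{j=0}^{L-1}c^{z^{jNK}}\in H,
\]
supported on the coordinates $0,NK,2NK,\dots$. The only coordinate shared with $\delta_b$ is $0$. Distinct conjugates $B^{z^i}$ commute and intersect trivially, so the commutator can be computed coordinatewise. It is trivial in every coordinate except $0$, where it equals $[b,c]$. Therefore $[\delta_b,\varepsilon_c]=[b,c]\in H$.

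The only real obstacle is arranging the supports so that the commutator collapses to a single $[b,c]$. A naive choice such as using $zb$ and $zc$ produces a product of $M$ shifted copies of $[b,c]$ instead. The staggered choice of exponents, $N$ for $b$ and then $NK$ for $c$, is what isolates the coordinate $0$. The remaining points are routine: checking that $g$ and $h$ have infinite order, and handling signs of exponents by passing to inverses.
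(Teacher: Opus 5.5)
Your proof is correct and follows essentially the same strategy as the paper. Both arguments extract base elements of $H$ from powers of $z^{k}b$, after dividing off a power of $z$ that lies in $H$, and then commute two such elements whose supports meet only in coordinate $0$. The only difference is how that single overlap is arranged. The paper uses $zb$ and $z^{-1}b'$ with a common exponent $m$, so the two supports run in opposite directions from $0$. You instead run both in the same direction with staggered step sizes $N$ and $NK$, which also works.
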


\begin{proof}
It is sufficient to show that all commutators of elements of $B$ are elements of $H$. Let $b,b'\in B$. Choose $m>0$ such that $(zb)^m$, $(z^{-1}b')^m$, and $z^m$ all lie in $H$. Let $u\coloneqq z^{-m}(zb)^m \in H$ and note that $u$ is a product of $b$ and conjugates of $b$ by positive powers of $z$. Similarly, let $u'\coloneqq z^m(z^{-1}b')^m \in H$ and note that $u'$ is a product of $b'$ and conjugates of $b'$ by negative powers of $z$. It follows that $[b,b'] = [u,u']$, so $[b,b'] \in H$ as desired.
\end{proof}

\begin{corollary}\label{cor:disjoint_cyclic}
Let $H\le E$ such that $H$ intersects every non-trivial cyclic subgroup of $E'$ non-trivially. Then $E[a,b]'\le H$ for all $a<b$ in $\R$.
\end{corollary}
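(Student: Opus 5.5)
Fix $a<b$ in $\R$. The plan is to find an internal wreath product $W\le E$ with acting group $\langle z\rangle\cong\Z$ and base $B=E[a,b]$. Then Lemma~\ref{lem:disjoint_cyclic} applied to $H\cap W$ gives $E[a,b]'\le H$. One subtlety is that the hypothesis only concerns cyclic subgroups of $E'$, not of all of $E$. So I would arrange $W\le E'$, which means choosing $z\in E'$; the base $E[a,b]$ is not inside $E'$ in general. To fix this, I would either shrink the base to $B=E[c,d]$ with $[a,b]\subseteq(c,d)$ so that the conjugates still work, or note that $B'=E[a,b]'\le E'$. However, the generators $zb$ used in the proof of Lemma~\ref{lem:disjoint_cyclic} must lie in $E'$, so really I want $W\le E'$.

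\textbf{Cleaner route.} Choose $\ell<a<b<r$ and work inside $E[\ell,r]'$, which lies in $E'$ and, by Proposition~\ref{prop:simple}, is simple.

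\textbf{Step 1.} Use Lemma~\ref{lem:moving} and local moving to find $z\in E[\ell,r]'$ such that the translates $(a,b)z^k$, $k\in\Z$, are pairwise disjoint intervals accumulating only at the ends of an interval $(p,q)\subseteq(\ell,r)$. Concretely, I would pick an interval $(p,q)\supset[a,b]$ and an element $z\in E[p,q]'$ without fixed points in $(p,q)$. It is not obvious that such an element exists, since a commutator might have fixed points. To get one, take $z$ with a fixed-point-free interval $(p,q)$ containing $[a,b]$. By Lemma~\ref{lem:comm_move} and the orbit argument in Lemma~\ref{lem:moving}, elements of $E[\ell,r]'$ can push $a$ arbitrarily close to $\ell$ and $b$ arbitrarily close to $r$. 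With a ping-pong-style composition I would aim for some $z$ with $[a,b]z\cap[a,b]=\emptyset$ and $[a,b]z^k$ pairwise disjoint. It suffices to find $z$ and an interval $J\supseteq[a,b]$ with $Jz\cap J=\emptyset$ and $J,Jz$ lying in a component of the complement of $\mathrm{Fix}(z)$. Then $z$ is monotone on that component, so all translates $Jz^k$ are disjoint.

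\textbf{Step 2.} Set $B=E[a,b]'$, which lies in $E'$. Its conjugates by $z^k$ are supported in $[a,b]z^k$, which are pairwise disjoint. Hence these conjugates pairwise commute and intersect trivially, and $W=\langle B,z\rangle\le E'$ is an internal wreath product $B\wr\Z$.

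\textbf{Step 3.} Let $H_0=H\cap W$. Every nontrivial cyclic subgroup of $W$ lies in $E'$, so it meets $H$, and therefore meets $H_0$. Lemma~\ref{lem:disjoint_cyclic} then gives $B'=E[a,b]''\le H$.

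\textbf{Step 4.} Upgrade from $E[a,b]''$ to $E[a,b]'$. The group $E[a,b]'$ is nonabelian simple by Proposition~\ref{prop:simple}, so it is perfect, and $E[a,b]''=E[a,b]'$. This completes the argument.

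The main obstacle is Step 1, namely producing $z\in E'$ with the disjoint-translates property. The first thing I would try is to take $f_-f_+$ from Lemma~\ref{lem:moving} sending a small interval onto a large one, and use its inverse as a contraction. An element of $E'$ mapping $[c,d]$ strictly inside itself (with $[c,d]\supset[a,b]$ pushed to a small subinterval) has an attracting fixed point in $(c,d)$. I would then choose a small $J$ near, but not containing, that fixed point, and relabel so that $J$ is the conjugated copy of $[a,b]$.
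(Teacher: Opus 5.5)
Your Steps 2--4 are the paper's proof. You take $W=\langle E[a,b]',z\rangle\le E'$, apply Lemma~\ref{lem:disjoint_cyclic} (correctly, to $H\cap W$) to get $E[a,b]''\le H$, and finish with perfectness from Proposition~\ref{prop:simple}.

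The gap is Step~1, which you never complete. You list strategies you ``would try'' (a fixed-point-free $z$, a ping-pong composition, an attracting fixed point followed by ``relabeling'' via an unspecified conjugation), but you never produce $z$. The obstacle you describe is not real, for two reasons.

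\textbf{No condition on $\mathrm{Fix}(z)$ is needed.} Since $z$ is orientation preserving, $b<a.z$ alone gives $b.z^k<a.z^{k+1}$ for every $k\in\Z$. So the intervals $(a,b)z^k$ are linearly ordered and pairwise disjoint. In particular, your extra requirement that $J$ and $Jz$ lie in a common component of the complement of $\mathrm{Fix}(z)$ follows automatically from $Jz\cap J=\emptyset$.

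\textbf{Such a $z\in E'$ comes straight from Lemma~\ref{lem:moving}.} Apply it with $\ell=a-2$, $m=a-\tfrac12$, $r=b+2$, starting interval $[a-1,a]$ and target $[a-1,b+1]$. This gives $z=f_-f_+\in E[\ell,r]'\le E'$ with $[a-1,b+1]\subseteq[a-1,a]z$, hence $a.z\ge b+1>b$. Equivalently, the orbit argument you cite shows that the $E[\ell,r]'$-orbit of $a$ has supremum $r$, not just infimum $\ell$.

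Your contraction idea can be pushed through (pick $J$ just to one side of an attracting fixed point, then conjugate $[a,b]$ into $J$), but it is a long detour around this one-line choice. With the choice above in place of Step~1, your argument is complete and coincides with the paper's.
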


\begin{proof}
Let $f \in E[a,b]'$. Since $E$ is locally moving, by Lemma~\ref{lem:moving} we can choose $z \in E'$ such that $b<a.z$, so $(a,b)z$ is disjoint from $(a,b)$. It follows that the subgroup $W$ of $E'$ generated by $E[a,b]'$ and $z$ is an internal wreath product with acting group $\langle z\rangle$ and base group $E[a,b]'$. Now by Lemma~\ref{lem:disjoint_cyclic} applied to $E[a,b]'$ we get $E[a,b]''\le H$, and $E[a,b]'=E[a,b]''$ since $E[a,b]'$ is (non-cyclic) simple by Proposition~\ref{prop:simple}, so $f \in H$ as desired.
\end{proof}

\begin{proposition}[Key disjointness result]\label{prop:disjoint_supports}
For $a<b$ and $c<d$ in $\R$, if $(a,b)\cap(c,d)=\emptyset$ then $G[a,b]'$ and $G[c,d]'$ have disjoint supports in $\Cantor$.
\end{proposition}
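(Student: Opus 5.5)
Suppose for contradiction that some $\kappa\in\Cantor$ is moved by both an element of $G[a,b]'$ and an element of $G[c,d]'$. The plan is to use Corollary~\ref{cor:no_planes} together with Corollary~\ref{cor:disjoint_cyclic}. Take $H\coloneqq (\Stab_G(\kappa))\iota^{-1}\le E$, the preimage of the stabilizer of the point $\kappa$. If we can show that $H$ meets every non-trivial cyclic subgroup of $E'$ non-trivially, then Corollary~\ref{cor:disjoint_cyclic} gives $E[a,b]'\le H$, so $G[a,b]'$ fixes $\kappa$. This contradicts the choice of $\kappa$ (or we can use it on whichever of the two intervals we like).

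That hypothesis cannot hold for an arbitrary $\kappa$, so we have to localize. First I would reduce as follows. Without loss of generality $b\le c$. Let $U_1$ be the support of $G[a,b]'$ and $U_2$ the support of $G[c,d]'$; these are open sets. Suppose $U_1\cap U_2\neq\emptyset$. Any point of $U_1\cap U_2$ is moved by some $g_1\in G[a,b]'$ and some $g_2\in G[c,d]'$.

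The strategy is to build, for every non-trivial $z\in E'$, a copy of $\Z^2$ containing a power of $z$ on which the stabilizer argument bites. Rather than that, the cleaner route is to apply Corollary~\ref{cor:disjoint_cyclic} with $H=(\Stab_G(\kappa))\iota^{-1}$ for a well-chosen $\kappa$. The needed property is this: for every $1\ne z\in E'$, some non-trivial power of $z\iota$ fixes $\kappa$. Given $z$, I would use Lemma~\ref{lem:comm_move} and local moving-ness to find a non-trivial $y\in E$ commuting with $z$. For instance, take $y$ supported in an interval disjoint from $\Supp(z)$; such an interval exists because $z\in E'$ has compact support by abelian germs (Lemma~\ref{lem:LoMAG}). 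Then $\langle z,y\rangle\cong\Z^2$, and Corollary~\ref{cor:no_planes} gives $z^py^q\iota\in\Stab(\kappa)$ with $(p,q)\neq(0,0)$. The difficulty is forcing $q=0$. For this I would choose $y$ so that no non-trivial power of $y\iota$ fixes $\kappa$, and so that $y\iota$ commutes with $z\iota$ and acts in a way incompatible with $z^p$ near $\kappa$.

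I expect this to be the main obstacle. Choosing $\kappa$ in $U_1\cap U_2$ seems to be exactly what should make it work. Using Lemma~\ref{lem:moving}, any $z\in E'$ can be conjugated so that its support misses either $[a,b]$ or $[c,d]$. So $z$ (after conjugation, which is harmless for the corollary's conclusion up to moving $\kappa$) commutes with all of $E[a,b]'$ or all of $E[c,d]'$. I would then pick $y$ from that group with $\kappa.(y\iota)\ne\kappa$. I need the infinite order of $y$ on the orbit of $\kappa$, which I would get by replacing $y$ by a suitable element via the wreath-product trick of Lemma~\ref{lem:disjoint_cyclic}. I would also need to handle the case where $z^p$ itself moves $\kappa$, via a descending-chain argument on a finite-order piece of $V$.

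This is the step I would work hardest on. If the conjugation bookkeeping fails, the fallback is to run Lemma~\ref{lem:disjoint_cyclic} directly inside $G$ with $W$ generated by $G[a,b]'$ and a translating element, and $H=\Stab_G(\kappa)$ for $\kappa\in U_1\cap U_2$. In that version the hypothesis on cyclic subgroups would come from Corollary~\ref{cor:no_planes} applied to $\langle w,g\rangle\cong\Z^2$ with $g\in G[c,d]'$ moving $\kappa$ and $w$ a cyclic generator supported away from $(c,d)$.
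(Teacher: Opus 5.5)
You have the right ingredients, but there is a genuine gap exactly at the step you flag, forcing $q=0$, and the criterion you propose for $y$ cannot close it.

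Knowing only that no non-trivial power of $(y)\iota$ fixes $\kappa$ does not exclude $(z^py^q)\iota\in\Stab_V(\kappa)$ with $q\neq 0$. Two commuting elements of $V$ can act near $\kappa$ through a common copy of $\Z$, so that $zy^{-1}$ fixes $\kappa$ while no power of $y$ does. This is the phenomenon behind the $F\times F$ example the paper gives right after the proposition. So the commutator structure must enter in a specific way, and your proposal does not say how.

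Write $G_0=G[a,b]$, $G_1=G[c,d]$, $\langle G_0,G_1\rangle\cong G_0\times G_1$, and let $p_1$ be the projection to $G_1$. The condition you actually need is
$\langle y\rangle\cap(\Stab_{G_0\times G_1}(\kappa))p_1=\{1\}$.
With that, projecting $z^py^q\in\Stab(\kappa)$ to $G_1$ gives $q=0$ at once, and your one-sided plan closes. Every non-trivial cyclic subgroup of $G_0'$ then meets $\Stab(\kappa)$. So Corollary~\ref{cor:disjoint_cyclic}, applied to $E[a,b]$ together with Lemma~\ref{lem:LoMAG}, gives $G_0'\le\Stab(\kappa)$, a contradiction.

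However, Corollary~\ref{cor:disjoint_cyclic} applied to $(\Stab(\kappa))p_1$ only produces such a $y$ when $G_1'\not\le(\Stab_{G_0\times G_1}(\kappa))p_1$. The other alternative does not by itself contradict $\kappa\in\Supp(G_1')$.

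The missing idea is the argument that kills this alternative, and it needs simplicity (Proposition~\ref{prop:simple}), which your proposal never uses.
\begin{enumerate}
\item If $G_1'\le(\Stab_{G_0\times G_1}(\kappa))p_1$, then $(\Stab_{G_0\times G_1'}(\kappa))p_1=G_1'$.
\item $\Stab_{G_1'}(\kappa)$ is normal in $\Stab_{G_0\times G_1'}(\kappa)$, and $G_0$ commutes with $G_1'$. Hence $\Stab_{G_1'}(\kappa)$ is normal in all of $G_1'$.
\item It is proper because $\kappa\in\Supp(G_1')$, so by simplicity it is trivial.
\item A copy of $\Z^2$ inside $G_1'$ (Lemma~\ref{lem:comm_move}) then meets $\Stab_V(\kappa)$ trivially, contradicting Corollary~\ref{cor:no_planes}.
\end{enumerate}
Running this dichotomy on both factors is exactly the paper's proof.

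Your other devices do not substitute for this step.
\begin{itemize}
\item Conjugating $z$ moves $z$, $\kappa$, and the intervals together, so it cannot change where $\Supp(z)$ sits relative to the intervals whose supports contain $\kappa$. It is also unnecessary: applying Corollary~\ref{cor:disjoint_cyclic} to $E[a,b]$ rather than $E$ means only $z\in E[a,b]'$ matter, and these already commute with $E[c,d]$.
\item The ``descending-chain'' step has no role, since once $q=0$ is forced, $z^p$ fixes $\kappa$ by construction.
\item Elements of $E'$ need not be compactly supported under these hypotheses, because abelian germs is assumed only at points of $\R$. Lemma~\ref{lem:LoMAG} does not give compact support.
\item Your fallback has the same hole: choosing $g\in G[c,d]'$ that merely moves $\kappa$ is again the insufficient criterion.
\end{itemize}
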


\begin{proof}
Write $G_0=G[a,b]$ and $G_1=G[c,d]$ for brevity. Suppose for contradiction that $\Supp(G_0') \cap \Supp(G_1') \ne \emptyset$, say $\kappa\in \Supp(G_0') \cap \Supp(G_1')$. Set $G\coloneq\langle G_0,G_1\rangle$, so $G\cong G_0\times G_1$ since $(a,b)\cap(c,d)=\emptyset$. We will equivocate between $G$ and $G_0\times G_1$, and no confusion should arise. Let $p_0\colon G\to G_0$ and $p_1\colon G\to G_1$ be the projection maps. Our overarching goal now is to find a copy of $\Z^2$ in $G$ that intersects $\Stab_G(\kappa)$ trivially.

First suppose $(\Stab_{G_0'\times G_1}(\kappa))p_0$ equals all of $G_0'$, hence is simple by Proposition~\ref{prop:simple}. Since $\Stab_{G_0'}(\kappa)$ is normal in $\Stab_{G_0'\times G_1}(\kappa)$, we see that $\Stab_{G_0'}(\kappa)$ is normal in $(\Stab_{G_0'\times G_1}(\kappa))p_0 = G_0'$. Since we are assuming $\kappa$ is in the support of $G_0'$ we know $\Stab_{G_0'}(\kappa)$ is not all of $G_0'$, and so by simplicity $\Stab_{G_0'}(\kappa)=\{1\}$. Since $G_0'$ contains $\Z^2$ by Lemma~\ref{lem:comm_move}, we have successfully found a copy of $\Z^2$ in $G$ intersecting $\Stab_G(\kappa)$ trivially. Similarly if $(\Stab_{G_0\times G_1'}(\kappa))p_1 = G_1'$ then a parallel argument produces such a copy of $\Z^2$.

Now assume $(\Stab_{G_0'\times G_1}(\kappa))p_0 = (\Stab_G(\kappa))p_0\cap G_0'$ is properly contained in $G_0'$, which is equivalent to saying $G_0'$ is not contained in $(\Stab_G(\kappa))p_0$, and also assume that $G_1'$ is not contained in $(\Stab_G(\kappa))p_1$. By Corollary~\ref{cor:disjoint_cyclic} we can choose non-trivial $\delta_0\in G_0$ and $\delta_1\in G_1$ such that $\langle \delta_0\rangle\cap (\Stab_G(\kappa))p_0 = \{1\}$ and $\langle \delta_1\rangle\cap (\Stab_G(\kappa))p_1 = \{1\}$. In fact $\langle \delta_0,\delta_1\rangle \cap \Stab_G(\kappa) = \{1\}$ since $(\delta_0^n \delta_1^m)p_0=\delta_0^n$ and $(\delta_0^n \delta_1^m)p_1=\delta_1^m$. Thus $\langle \delta_0,\delta_1\rangle$ is a copy of $\Z^2$ in $G$ intersecting $\Stab_G(\kappa)$ trivially. In particular this is a copy of $\Z^2$ in $V$ intersecting $\Stab_V(\kappa)$ trivially, contradicting Corollary~\ref{cor:no_planes}.
\end{proof}

This result drives home why we need to work with the commutator subgroup so often rather than the whole group, since the analogous result using the whole group is not always true. For example, it is easy to find a copy of $F\times F$ inside $V$ in which the two copies of $F'$ have disjoint support but the two copies of $F$ do not, thanks to acting via a common copy of $\Z$ on some region.

\medskip

Now we will construct our $E$-semiconjugacy $\Cantor\to S^1$, where the action of $E$ on $\Cantor$ is via $\iota$ and the action of $V$, and the action of $E$ on $S^1=\R\cup\{\infty\}$ is the standard one.

\begin{definition}[The function $\phi$]\label{def:phi}
For $\kappa\in \Cantor$, if $\kappa\in\Supp(G[a,b]')$ for some $a<b$ in $\R$ then define $\phi \colon \Cantor\to S^1$ to be $(\kappa)\phi \coloneqq x$ where
\[
\{x\} = \bigcap\left\{(a,b) \mid a<b\text{ in }\R\text{ such that } \kappa \in \Supp(G[a,b]')\right\} \text{,}
\]
and otherwise define $(\kappa)\phi=\infty$.
\end{definition}

Note that $(a,b)=\Supp(E[a,b]')$ by Lemma~\ref{lem:comm_move}, so the point is that $(\kappa)\phi\in \Supp(E[a,b]')$ if and only if $\kappa\in \Supp(G[a,b]')$. Also note that $(\kappa)\phi=\infty$ if and only if $\kappa$ is not in the support of any $G[a,b]'$.

\begin{lemma}
The function $\phi$ is well defined.
\end{lemma}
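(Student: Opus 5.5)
To show $\phi$ is well defined, fix $\kappa\in\Cantor$ lying in $\Supp(G[a,b]')$ for at least one $a<b$. Let $\mathcal{I}_\kappa$ be the family of open intervals $(a,b)$ with $\kappa\in\Supp(G[a,b]')$. We must show that $\bigcap\mathcal{I}_\kappa$ is exactly one point $x\in\R$. The plan is to prove three properties of $\mathcal{I}_\kappa$.

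\emph{Step 1: pairwise intersection.} Any two members of $\mathcal{I}_\kappa$ intersect. This is immediate from Proposition~\ref{prop:disjoint_supports}: if $(a,b)\cap(c,d)=\emptyset$, then $G[a,b]'$ and $G[c,d]'$ have disjoint supports, so $\kappa$ cannot lie in both.

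\emph{Step 2: splitting.} If $(\ell,r)\in\mathcal{I}_\kappa$ and $\ell<x_2<x_3<r$, then one of $(\ell,x_3)$ and $(x_2,r)$ lies in $\mathcal{I}_\kappa$. By Lemma~\ref{lem:gen} with $n=4$, $E[\ell,r]'$ is generated by $E[\ell,x_3]'$ and $E[x_2,r]'$. Applying $\iota$, $G[\ell,r]'$ is generated by $G[\ell,x_3]'$ and $G[x_2,r]'$. If $\kappa$ were fixed by both of these subgroups, it would be fixed by $G[\ell,r]'$, a contradiction. Note that here the support means the set of points moved by some element; whether closures are taken should be checked against the paper's convention.

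\emph{Step 3: nested closed intervals.} Start with some $(a_0,b_0)\in\mathcal{I}_\kappa$. Iterate Step 2 with $x_2,x_3$ at the one-third and two-thirds points. Each step passes to a member of $\mathcal{I}_\kappa$ whose length is $2/3$ of the previous one and which is contained in it. This gives nested intervals $(a_k,b_k)\in\mathcal{I}_\kappa$ with lengths tending to $0$, so $\bigcap_k[a_k,b_k]=\{x\}$.

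\emph{Step 4: the point lies in every member, and the intersection is nonempty.} Any $(c,d)\in\mathcal{I}_\kappa$ meets every $(a_k,b_k)$ by Step 1, so $x\in[c,d]$. The remaining issue, which I expect to be the main obstacle, is excluding $x=c$ or $x=d$. The fix is to shrink intervals. Lemma~\ref{lem:LoMAG} says every element of $E[c,d]'$ lies in some $E[c',d']'$ with $c<c'<d'<d$. Take $g\in G[c,d]'$ moving $\kappa$; then $g\in G[c',d']'$, so $(c',d')\in\mathcal{I}_\kappa$ and hence $x\in[c',d']\subseteq(c,d)$.

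The same shrinking handles the nested sequence itself. At each step of Step 3, first replace the current interval by such a strictly smaller member $(a_k',b_k')$ with $[a_k',b_k']\subset(a_k,b_k)$, and then split. The resulting intervals have strictly nested closures, so the intersection of the open intervals equals the intersection of the closures, namely $\{x\}$. Combining this with the previous paragraph, $x$ lies in every member of $\mathcal{I}_\kappa$, while the shrinking sequence forces $\bigcap\mathcal{I}_\kappa\subseteq\{x\}$. Hence $\bigcap\mathcal{I}_\kappa=\{x\}$, and $\phi$ is well defined.
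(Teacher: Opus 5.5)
Your proof is correct and follows essentially the paper's route. It uses the same three ingredients: Proposition~\ref{prop:disjoint_supports} for pairwise intersection, Lemma~\ref{lem:gen} for splitting, and Lemma~\ref{lem:LoMAG} to move the point off the endpoints. The only difference is that you locate the point constructively by iterated trisection, whereas the paper obtains a common point of all the closures by compactness and then rules out a second point by contradiction. Your final strictly-nested-closures refinement is unnecessary, since $\bigcap\mathcal{I}_\kappa\subseteq\bigcap_k[a_k,b_k]=\{x\}$ holds already.
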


\begin{proof}
Suppose $\kappa\in \Supp(G[a,b]')$ for some $a<b$, so the intersection is over a non-empty index set and $(\kappa)\phi\ne\infty$. We have to show that the intersection is non-empty and contains only one point.

If $\kappa$ lies in the supports of both $G[a,b]'$ and $G[c,d]'$, then by Proposition~\ref{prop:disjoint_supports} $(a,b)\cap(c,d)\ne\emptyset$. In particular $[a,b]\cap[c,d]\ne\emptyset$. Hence the closures of the subsets of $\R$ that we are intersecting pairwise intersect, which since they are compact means they must all have a common intersection. Now we claim that this non-empty intersection of closed intervals cannot have more than one point. Suppose $x<y$ both lie in this intersection. Choose $\ell<r$ and $z<w$ with $\ell<x<z<w<y<r$. By Lemma~\ref{lem:gen}, $E[\ell,r]'$ is generated by $E[\ell,w]'$ and $E[z,r]'$, so $G'$ is generated by $G[\ell,w]'$ and $G[z,r]'$. Thus $\kappa\in \Supp(G[\ell,r]')=\Supp(G[\ell,w]')\cup \Supp(G[z,r]')$. If $\kappa\in\Supp(G[\ell,w]')$ then $y\in [\ell,w]$, a contradiction. If $\kappa\in\Supp(G[z,r]')$ then $x\in [z,r]$, a contradiction. We conclude the intersection contains a single point.

Now we must argue that this unique point $(\kappa)\phi$ lies in the interior of all the open intervals that we are actually intersecting together. Suppose not, so without loss of generality it equals the left endpoint $a$ of some $[a,b]$ such that $\kappa\in\Supp(G[a,b]')$. Choose $g\in G[a,b]'$ with $\kappa\in\Supp(g)$. By Lemma~\ref{lem:LoMAG} $g\in G[c,d]'$ for some $a<c<d<b$. Thus $\kappa\in \Supp(G[c,d]')$, and so $a=(\kappa)\phi\in [c,d]\subseteq (a,b)$, a contradiction.
\end{proof}

Our next goal is to prove that $\phi$ is continuous, surjective, and finally a semiconjugacy. We need to impose one extra hypothesis to make this work.

\begin{definition}\label{def:FG_filtered}
Call $E$ \emph{FG-filtered} if for all $a<b$ in $\R$ the group $E[a,b]'$ is contained in a finitely generated subgroup of $E[\ell,r]'$ for some $\ell<r$ in $\R$ with $\ell<a<b<r$. Note that by Lemma~\ref{lem:moving}, as soon as this property holds for some $a<b$ it holds for all $a<b$.
\end{definition}

\begin{proposition}\label{prop:cts}
If $E$ is FG-filtered then $\phi$ is continuous.
\end{proposition}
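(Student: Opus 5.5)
We prove continuity at each $\kappa\in\Cantor$ separately, splitting according to whether $(\kappa)\phi$ lies in $\R$ or equals $\infty$. Two facts drive the argument.

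\emph{Openness.} For any subgroup $H\le V$, $\Supp(H)$ is open, being a union of supports of elements of $V$, each of which is open.

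\emph{Closedness from finite generation.} If $H\le V$ is finitely generated, say by $h_1,\dots,h_k$, then $\Supp(H)=\bigcup_i\Supp(h_i)$. Each $h_i$ is a homeomorphism given locally by prefix replacements, so its fixed-point set is determined on finitely many cones. On a cone where $h_i$ acts as $w\kappa'\mapsto u\kappa'$ with $u\ne w$, the fixed set is either empty or a single point. Hence $\Supp(h_i)$ differs from a clopen set by at most finitely many points. The consequence we need is that the closure of $\Supp(H)$ is contained in $\Supp(H)$ together with finitely many points. This is where FG-filtered enters. Given $a<b$, choose $\ell<a<b<r$ and a finitely generated $K$ with $G[a,b]'\le K\le G[\ell,r]'$. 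Then
\[
\overline{\Supp(G[a,b]')}\subseteq \overline{\Supp(K)}\subseteq \Supp(G[\ell,r]')\cup P,
\]
where $P$ is finite. The finite exceptional set $P$ is the main obstacle. I would try first to show that the closure of $\Supp(h)$ actually lies in $\Supp(\langle h,\text{conjugates}\rangle)$ for some other element of $G[\ell',r']'$. Failing that, I would enlarge to $[\ell',r']$ and use that, by Lemma~\ref{lem:comm_move}, $G[\ell',r']'$ moves points near any isolated fixed point of $h$. If both attempts fail, the fallback is to accept the finite set $P$ and handle its points separately, using that $\phi$ maps them somewhere and choosing the intervals to avoid them.

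\emph{Case $(\kappa)\phi=x\in\R$.} Take a neighbourhood $(x-\varepsilon,x+\varepsilon)$. Choose $a<b$ with $\kappa\in\Supp(G[a,b]')$. By the uniqueness part of the well-definedness proof, shrinking $(a,b)$ via Lemma~\ref{lem:gen} places $x$ inside an interval $(a',b')\subseteq(x-\varepsilon,x+\varepsilon)$ with $\kappa\in\Supp(G[a',b']')$. That support is open, and every $\kappa'$ in it satisfies $(\kappa')\phi\in(a',b')$. So $\phi$ is continuous at $\kappa$.

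\emph{Case $(\kappa)\phi=\infty$.} Given $R>0$, we need a neighbourhood $U$ of $\kappa$ with $(U)\phi\cap[-R,R]=\emptyset$. By the definition of $\phi$, a point $\kappa'$ satisfies $(\kappa')\phi\in[-R,R]$ only if $\kappa'\in\Supp(G[-R-1,R+1]')$. Take finitely generated $K$ with $G[-R-1,R+1]'\le K\le G[\ell,r]'$. The closure of $\Supp(G[-R-1,R+1]')$ is contained in $\Supp(G[\ell,r]')\cup P$. The point $\kappa$ is not in $\Supp(G[\ell,r]')$, because $(\kappa)\phi=\infty$. Provided $\kappa\notin P$, the complement of this closure is an open neighbourhood $U$ of $\kappa$ on which $\phi$ avoids $[-R,R]$, which completes the proof in this case. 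The only remaining issue is therefore to rule out, or absorb, points of $P$ at which $\phi=\infty$, by the approach described under closedness above.
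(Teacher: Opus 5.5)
Your case $(\kappa)\phi\in\R$ is essentially the paper's argument: split $G[\ell,r]'$ via Lemma~\ref{lem:gen} into three pieces, the outer two of which fix $\kappa$. Your observation that $\overline{\Supp(K)}\subseteq\Supp(K)\cup P$ with $P$ finite, for finitely generated $K\le V$, is also correct.

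\textbf{The gap.} The case $(\kappa)\phi=\infty$ is left open exactly at its crux. A point of $P$ is a fixed point of one of the generators $h_i$ of $K$ at which $h_i$ has non-trivial germ, and nothing you prove excludes $\kappa$ from being such a point. If it were, $\kappa$ could lie in $\overline{\Supp(G[a,b]')}$ and continuity at $\kappa$ would be in jeopardy. None of your proposed remedies addresses this:
\begin{itemize}
\item Lemma~\ref{lem:comm_move} concerns the action of $E$ on $\R$ and says nothing about fixed points of $G$ in $\Cantor$.
\item ``Handling the points of $P$ separately'' or ``choosing the intervals to avoid them'' is not available, because continuity has to be proved at $\kappa$ itself.
\item Finite generation plus $\kappa$ being a global fixed point cannot suffice on its own. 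A cyclic subgroup of $V$ generated by an element with an attracting fixed point $p$ is finitely generated and fixes $p$, yet $p\in\overline{\Supp}\setminus\Supp$. What rules the bad case out here must be a property of $G[\ell,r]'$ specifically.
\end{itemize}

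\textbf{The missing idea.} The paper uses the fact that the group of germs of $V$ at any point of $\Cantor$ is cyclic. An element fixing $\kappa$ acts near $\kappa$ by some $w\kappa'\mapsto u\kappa'$ with $w,u$ both prefixes of $\kappa$, and $|u|-|w|$ determines the germ, giving an embedding into $\Z$.
\begin{itemize}
\item Since $(\kappa)\phi=\infty$, all of $G[\ell,r]'$ fixes $\kappa$.
\item Since $G[\ell,r]'\cong E[\ell,r]'$ is non-abelian simple by Proposition~\ref{prop:simple} (perfectness is all that is needed), its image in this cyclic germ group is trivial.
\item So each of the finitely many generators of $K$ fixes some neighbourhood of $\kappa$ pointwise.
\end{itemize}
This shows $\kappa\notin P$ and completes your argument. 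More directly, it gives a single neighbourhood $U'$ of $\kappa$ fixed pointwise by $K\supseteq G[a,b]'$, on which $\phi$ avoids $(a,b)$ by your first-case argument. So the closure bookkeeping with $P$ is unnecessary. This is where FG-filtered is really used: to pass from ``each element has trivial germ at $\kappa$'' to a common fixed neighbourhood.
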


\begin{proof}
Let $\kappa\in \Cantor$, set $x=(\kappa)\phi$, and let $U$ be an open neighborhood of $x$ in $S^1$. We must show that for all $\kappa'$ in some open neighborhood $U'$ of $\kappa$, $(\kappa')\phi$ lies in $U$. Let us first assume that $x\ne \infty$, i.e., $\kappa\in\Supp(G[\ell,r]')$ for some $\ell<r$ in $\R$, so note that $\ell<x<r$. Up to making $U$ smaller we can assume that $\infty\not\in U$. Choose $a,b,c,d$ with $\ell<a<c<x<d<b<r$ and $(a,b)\subseteq U$. We claim that $\kappa\in\Supp(G[a,b]')$. Indeed, by Lemma~\ref{lem:gen} $G[\ell,r]'$ is generated by $G[\ell,c]'$, $G[a,b]'$, and $G[d,r]'$, and $\kappa$ is fixed by the first and third of these (since $c<x<d$ and hence $x=(\kappa)\phi$ is not in $(\ell,c)$ nor $(d,r)$), so cannot also be fixed by the second. Now for any open neighborhood $U'$ of $\kappa$ contained in $\Supp(G[a,b]')$ we have $(U')\phi \subseteq (a,b) \subseteq U$ as desired.

Now suppose $x=\infty$, i.e., $\kappa$ is fixed by $G[a,b]'$ for all $a<b$. Up to making $U$ smaller we can assume it is the complement in $S^1$ of $[a,b]$ for some $a<b$ in $\R$. We want to show that some open neighborhood $U'$ of $\kappa$ is fixed by $G[a,b]'$, since then $(U')\phi$ will lie in the complement of $[a,b]$, i.e., in $U$. Since $E$ is FG-filtered we can choose $\ell<r$ in $\R$ with $\ell<a<b<r$ and finitely generated $H$ with $G[a,b]'\le H\le G[\ell,r]'$. It is well known and easy to see that the group of germs of $V$ at $\kappa$ is cyclic, and $G[\ell,r]'$ is (non-cyclic) simple by Proposition~\ref{prop:simple}, so the image of $G[\ell,r]'$, hence of $H$, in the group of germs at $\kappa$ is trivial. In particular since $H$ is finitely generated it fixes an open neighborhood $U'$ of $\kappa$, and hence $G[a,b]'$ fixes $U'$ as desired.
\end{proof}

\begin{proposition}\label{prop:surj}
If $G$ is FG-filtered then $\phi$ is surjective.
\end{proposition}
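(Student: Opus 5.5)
Since $\phi$ is continuous by Proposition~\ref{prop:cts} and $\Cantor$ is compact, the image $(\Cantor)\phi$ is a closed subset of $S^1$. It therefore suffices to show that the image is dense in $S^1$. Equivalently, I will show that for every $a<b$ in $\R$ the image meets $(a,b)$; then every point of $\R$ is a limit of image points, and so is $\infty$, since the image is closed and meets arbitrarily large intervals.

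Fix $a<b$. The plan is to find some $\kappa$ with $\kappa\in\Supp(G[a,b]')$. First, $G[a,b]'$ is non-trivial: $E[a,b]'$ is non-trivial by Lemma~\ref{lem:comm_move}, and $\iota$ is injective. Since $G$ acts faithfully on $\Cantor$, some element of $G[a,b]'$ moves some point $\kappa$, and so $\kappa\in\Supp(G[a,b]')$.

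By the definition of $\phi$ and its well-definedness, $(\kappa)\phi\in(a,b)$. Indeed, $(\kappa)\phi$ lies in the interior of every interval $(c,d)$ with $\kappa\in\Supp(G[c,d]')$, and $(a,b)$ is one of these. This shows the image meets every bounded open interval, so it is dense. Being closed, it is all of $S^1$.

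I expect no serious obstacle here. The one point needing care is the reduction to density: it relies on continuity from Proposition~\ref{prop:cts}, which is exactly where the FG-filtered hypothesis enters, together with the compactness of $\Cantor$. Without continuity one would still get that every point of $\R$ is a limit of image points, but not that $\infty$ and every real point are actually attained. A direct alternative would be to take a nested sequence of intervals $(a_n,b_n)$ shrinking to a given $x$, pick $\kappa_n\in\Supp(G[a_n,b_n]')$, and pass to a convergent subsequence. This is again the compactness-plus-continuity argument, so I would simply present the closed-and-dense version.
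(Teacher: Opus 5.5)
Your proposal is correct and follows essentially the same route as the paper's proof. You use continuity (Proposition~\ref{prop:cts}) and compactness of $\Cantor$ to get a closed image, then show density by picking $\kappa\in\Supp(G[a,b]')$, which is nonempty since $G[a,b]'$ is a nontrivial subgroup of $V$, so that $(\kappa)\phi\in(a,b)$.
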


\begin{proof}
We know $\phi$ is continuous by Proposition~\ref{prop:cts}, so the image of $\phi\colon \Cantor\to S^1$ is compact, hence closed. It thus suffices to prove that the image is dense. Any non-empty open subset $U$ of $S^1$ contains an open interval $(a,b)$ for some $a<b$ in $\R$. Since $G[a,b]'$ acts non-trivially on $\Cantor$ (being a non-trivial subgroup of $V$), we can choose $\kappa\in\Supp(G[a,b]')$, and then $(\kappa)\phi\in (a,b)$ by the definition of $\phi$, so the image of $\phi$ meets $U$.
\end{proof}

The following now establishes Theorem~\ref{thrm:main}.

\begin{corollary}\label{cor:semiconj}
If $G$ is FG-filtered then $\phi \colon \Cantor\to S^1$ is an $E$-semiconjugacy.
\end{corollary}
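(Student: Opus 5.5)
Continuity and surjectivity of $\phi$ are already given by Propositions~\ref{prop:cts} and~\ref{prop:surj}. So the only remaining point is $E$-equivariance: we must show $(\kappa.g)\phi = ((\kappa)\phi).e$ for every $e\in E$ with $g=(e)\iota$ and every $\kappa\in\Cantor$. The action of $E$ on $S^1=\R\cup\{\infty\}$ fixes $\infty$ and is the given action on $\R$.

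The key observation is how conjugation moves the subgroups involved. Since $\Supp(e^{-1}fe)=(\Supp f)e$ for $f\in E$, we have $e^{-1}E[a,b]e=E[(a)e,(b)e]$. Commutator subgroups are preserved by conjugation, so $e^{-1}E[a,b]'e=E[(a)e,(b)e]'$. Applying $\iota$ gives $g^{-1}G[a,b]'g=G[(a)e,(b)e]'$. Supports in $\Cantor$ transform the same way: $\Supp(g^{-1}Hg)=(\Supp H)g$. Hence
\[
\kappa\in\Supp(G[a,b]') \iff \kappa.g\in \Supp(G[(a)e,(b)e]').
\]
Because $e$ is an order-preserving bijection of $\R$, the assignment $(a,b)\mapsto((a)e,(b)e)$ is a bijection on the set of pairs $a<b$.

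The two cases of Definition~\ref{def:phi} now follow.

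\emph{Case $(\kappa)\phi=\infty$.} Then $\kappa$ lies in no $\Supp(G[a,b]')$. By the equivalence above, $\kappa.g$ lies in no $\Supp(G[c,d]')$ either, so $(\kappa.g)\phi=\infty=(\infty).e$.

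\emph{Case $(\kappa)\phi\neq\infty$.} The index set of intervals defining $(\kappa.g)\phi$ is exactly the image under $e$ of the index set defining $(\kappa)\phi$. Since $e$ is a bijection, it commutes with intersections, so
\[
\{(\kappa.g)\phi\}=\bigcap ((a,b))e=\Big(\bigcap (a,b)\Big)e=\{((\kappa)\phi).e\}.
\]

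I expect no real obstacle here. The only care needed is bookkeeping with right actions: checking that conjugation by $g^{-1}$ (rather than $g$) matches $(a,b)\mapsto(a,b)e$, and that $\iota$ carries the relation from $E$ to $G$.
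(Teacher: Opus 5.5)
Your proposal is correct and follows essentially the same route as the paper: you cite Propositions~\ref{prop:cts} and~\ref{prop:surj}, then use $g^{-1}G[a,b]'g=G[(a)e,(b)e]'$ to transport the index set of intervals defining $\phi$ by $e$. The only cosmetic difference is in the $\infty$ case: you derive it from the same support equivalence, whereas the paper checks directly that each $h\in G[a,b]'$ fixes $\kappa.g$ by conjugating $h$ by $g$.
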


\begin{proof}
By Propositions~\ref{prop:cts} and~\ref{prop:surj} it is continuous and surjective, so we just need to prove it is $E$-equivariant. Let $f\in E$ and set $g=(f)\iota$. We need to show that $g \circ \phi  = \phi \circ f$. Let $\kappa\in \Cantor$, write $x=(\kappa)\phi$, and write $y=(\kappa.g)\phi$, so we have to show that $x.f = y$.

First suppose $x=\infty$, so $\kappa$ lies outside the support of $\Supp(G[a,b]')$ for all $a<b$ in $\R$. Then since $\infty.f=\infty$ the goal is to prove that $y=\infty$, or equivalently that $\kappa.g$ lies outside the support of $\Supp(G[a,b]')$ for all $a<b$ in $\R$. Indeed, for any $h\in G[a,b]'$ we have $(\kappa.g).h = (\kappa.(ghg^{-1})).g = \kappa.g$ since $ghg^{-1}\in G[a.g^{-1},b.g^{-1}]'$.

Now suppose $\kappa\in\Supp(G[a,b]')$ for some $a<b$ in $\R$, so $x=(\kappa)\phi$ is the unique point lying in all $(a,b)$ for which $\kappa\in\Supp(G[a,b]')$. Since $\iota$ is a homomorphism we have $g^{-1} G[a,b]' g = (f^{-1} E[a,b]' f)\iota = (E[a.f,b.f]')\iota = G[a.f,b.f]'$ for all $a<b$, so $\kappa.g$ lies in the support of $G[a.f,b.f]'$ if and only if $\kappa$ lies in the support of $G[a,b]'$. Thus, $y=(\kappa.g)\phi$ is the unique point lying in $(a.f,b.f)$ for all $(a,b)$ for which $\kappa\in\Supp(G[a,b]')$, from which we conclude that $y=x.f$ as desired.
\end{proof}

With some additional conditions, we can prove Theorem~\ref{thrm:non_embed}.

\begin{definition}[Robust]\label{def:robust}
Call $E\le \Homeo_+(\R)$ \emph{robust} if it is locally moving with abelian germs, $E'$ is the union of the $E[a,b]'$ for $a<b$ in $\R$, and whenever $f\in E'$ has its closure of support contained in $(a,b)$ we have $f\in E[a,b]'$.
\end{definition}

\begin{definition}[Tame at $\infty$]\label{def:tame}
Call $f\in \Homeo_+(\R)$ \emph{tame at $\infty$} if there exists $a\in\R$ such that the entire ray $(a,\infty)$ is either contained in the fixed point set of $f$ or the support of $f$. Call $E\le \Homeo_+(\R)$ \emph{tame at $\infty$} if every $f\in E$ is tame at $\infty$.
\end{definition}

For example, piecewise-linear homeomorphisms (with finitely many pieces) are tame at $\infty$.

\begin{proof}[Proof of Theorem~\ref{thrm:non_embed}]
Let $\iota\colon E\to V$ be an embedding. Since $E$ is locally moving with abelian germs we can define the map $\phi\colon \Cantor\to S^1$ as in Definition~\ref{def:phi}, and since $E$ is FG-filtered, $\phi$ is a semiconjugacy by Corollary~\ref{cor:semiconj}.

Since $E$ is robust, the group of germs at $\infty$ is abelian. Thus it suffices to prove that any two non-trivial elements of the group of germs of $E$ at $\infty$ have a non-trivial common power. Let $e,f\in E$ have non-trivial germs at $\infty$, and say without loss of generality that $\infty$ is an attractor for both $e$ and $f$, that is, $x.e>x$ and $x.f>f$ for all sufficiently large $x$. By robustness, $[e^{-1},f^{-1}]$ lies in $E[a,b]'$ for some $a<b$ in $\R$. Hence $[e^{-1},f^{-1}]$ fixes $[b,\infty)$, i.e., $x.ef=x.fe$ for all $x\in[b,\infty)$. Without loss of generality $b$ is large enough that $x.e>x$ and $x.f>x$ for all $x\in[b,\infty)$. Thus for all $n,m\in\N$ and all $x\in[b,\infty)$ we have $x.e^n f^m = x.f^m e^n$, so $[e^{-n},f^{-m}]$ fixes $[b,\infty)$. Fix such an $x\in(b,\infty)$, and choose $\kappa\in (x)\phi^{-1}$. For each $n,m\in\N$, the element $[e^{-n},f^{-m}]$ has closure of support contained in $(c,d)$ for some $c<d\le b$ and hence by robustness $[e^{-n},f^{-m}]$ lies in $E[c,d]'$. Since $d\le b<x$ we have that $x=(\kappa)\phi$ lies outside $(c,d)$, so $([e^{-n},f^{-m}])\iota$ fixes $\kappa$. This holds for all $n,m$, so we conclude that $\kappa.(e^n f^m)\iota = \kappa.(f^n e^m)\iota$ for all $n,m\in\N$.

Now let $S$ be a finite symmetric generating set for $V$ containing $g=(e)\iota$ and $h=(f)\iota$, and let $\Gamma = \AG_{\kappa.V}(V,S)$. The vertex $\kappa.g^n h^m$ equals $\kappa.h^n g^m$, and hence this vertex is adjacent in $\Gamma$ to the vertices $\kappa.g^n h^{m+1}$ and $\kappa.h^m g^{n+1} = \kappa.g^{n+1} h^m$. In particular the map $(n,m)\mapsto \kappa.g^n h^m$ is a $1$-Lipschitz map from $\N^2$ to the vertex set of $\Gamma$. By the same proof as in Corollary~\ref{cor:no_planes} using $\Z^2$, this cannot be injective, and so using that $x.ef=x.fe$, $x.e>x$, and $x.f>x$, we see that there exist $n,m\in\N$ with $x.e^n=x.f^m$. Set $x_i=x.e^i$ for all $i\ge 0$, so the $x_i$ go to $\infty$. Note that
\[
x_i.f^m=x.(e^i f^m) = x.(f^m e^i) = x.(e^n e^i) = x_i.e^n \text{,}
\]
and so $f^m e^{-n}$ fixes every $x_i$. Since $E$ is tame at $\infty$, $f^m e^{-n}$ fixes some $(a,\infty)$, and so $e^n$ and $f^m$ represent the same germ at $\infty$ as desired.
\end{proof}

\section{Examples and applications}\label{sec:examples}

In this section we survey examples of subgroups of $\Homeo_+(\R)$ to which our results apply. A class of groups encompassing all our examples is the Bieri--Strebel groups, as in \cite{bieri16}, which we recall now in full generality. In practice we will primarily be interested only in certain special cases.

\begin{definition}[Bieri--Strebel groups]
Let $I\subseteq \R$ be a closed interval, $P$ a subgroup of the multiplicative group $\R_{>0}^\times$ of positive reals, and $A$ a $\Z[P]$-submodule of the additive reals $\R$. The \emph{Bieri--Strebel group} $F(I;A,P)$ (denoted $G(I;A,P)$ in \cite{bieri16}) is the group of all piecewise-linear orientation-preserving homeomorphisms of $I$ with slopes in $P$ and breakpoints in $A$.
\end{definition}

For our purposes, we will typically want to use a compact $I$ to realize $F(I;A,P)$, and then identify the interior of $I$ with $\R$ via a homeomorphism. Thus, even though our results are about groups of homeomorphisms of $\R$, one should picture the Bieri-Strebel structure as being about, say, $I=[0,1]$.

\begin{example}\label{ex:BHT_BST}
If $I=[0,1]$, $A=\Z[\frac{1}{2}]$, and $P=\langle 2\rangle$, then $F(I;A,P)$ is Thompson's group $F$. More generally if $I=[0,1]$, $A=\Z[\frac{1}{n}]$ ($n\in\N$ with $n\ge 2$), and $P=\langle n\rangle$, then $F(I;A,P)$ is the \emph{Brown--Higman--Thompson group} $F_n$ \cite{brown87,higman74}. It turns out all the $F_n$ embed in $F$; this was shown in \cite{brin98}, and also see \cite{burillo01}. Finally, if $I=[0,1]$, $P=\langle S\rangle$ for $S=\{n_1,\dots,n_s\}$ with $n_i\in\N$ and $n_i\ge 2$ for all $i$, and $A=\Z[\frac{1}{n_1\cdots n_s}]$, then $F(I;A,P)$ is the \emph{Brown--Stein--Thompson group} $F_S$ \cite{stein92}. In general the Bieri--Strebel groups can be mysterious, for example a complete picture of which ones are finitely generated remains open, but all the special cases $F$, $F_n$, and $F_S$ are finitely generated (and even type $\F_\infty$) \cite{stein92}.
\end{example}

Our main applications are that the only $F_S$ that embed in $V$ are the $F_n$, and that in this case all copies of $F_n$ in $V$ are semiconjugate to the standard one. Before approaching the proofs of these, we need to set up some important general properties of Bieri--Strebel groups.

\begin{cit}\label{cit:BS_trans}\cite[Corollary~A5.6]{bieri16}
Suppose $I\ne\R$. Let $O$ be an $F(I;A,P)$-orbit in $A\cap \mathrm{int}(I)$. Then $O$ is dense in $I$, and for all $x_1<\cdots<x_k$ and $y_1<\cdots<y_k$ in $O$, there exists $f\in F(I;A,P)$ such that $x_i.f=y_i$ for all $i$.
\end{cit}

\begin{lemma}\label{lem:coabelian}
Assume $I$ is compact and $F(I;A,P)$ is finitely generated. Then for $F(I;A,P)' \le H\le F(I;A,P)$ we have that $H$ is finitely generated if and only if it has non-trivial groups of germs at both endpoints of $I$.
\end{lemma}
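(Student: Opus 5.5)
Write $B=F(I;A,P)$ and $I=[p,q]$. The plan is to use the germ homomorphisms at the two endpoints, $\chi_p,\chi_q\colon B\to P$, which record the slopes at $p$ and at $q$.

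The first step is to identify $B'$. I expect $B'$ to be exactly the subgroup of elements with trivial germ at both endpoints. In particular, $B'$ is the union of the $B[a,b]$, taken over $[a,b]$ contained in the interior of $I$, which is the same as the union of the $B[a,b]'$. This is the standard Bieri--Strebel description, which I would cite from \cite{bieri16}.

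Given this, $B/B'$ will be abelian, and finitely generated because $B$ is. For any $B'\le H\le B$, $H$ is then finitely generated if and only if $B'$ is finitely generated relative to $H$, since $H/B'$ is a finitely generated abelian group. The same reasoning shows that the germ group of $H$ at each endpoint is $(H)\chi_p$, respectively $(H)\chi_q$. The argument therefore splits into the two directions.

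\emph{If $H$ has a trivial germ group at an endpoint, say $p$, then $H$ is not finitely generated.} Every element of $H$ fixes a neighborhood of $p$. A finite generating set would then fix a common interval $[p,p+\varepsilon]$, so all of $H$ would fix it. But $H\supseteq B'$, and $B'$ moves points arbitrarily close to $p$ because $B$ is locally moving near $p$. This contradiction finishes this direction.

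\emph{If both germ groups are non-trivial, then $H$ is finitely generated.} Choose $e,f\in H$ whose germs are non-trivial at $p$ and at $q$ respectively. Multiplying by elements of $B'$ does not change germs, and a product such as $ef^{\pm1}$ handles the case where the same element would be needed at both ends. With this, I aim to find a single $h\in H$ with $(h)\chi_p\ne1$ and $(h)\chi_q\ne1$. After possibly inverting and modifying $h$ near one end, I want $h$ to push the whole interior of $I$ toward $q$ near both ends.

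Next, fix a subinterval $[a,b]$ in the interior. $B[a,b]$ is isomorphic to a Bieri--Strebel group on a compact interval, via affine rescaling, or by using Citation~\ref{cit:BS_trans} to conjugate. It is contained in a finitely generated subgroup $K\le B$: take finitely many generators of $B$ and conjugate them into $[a,b]$-supported pieces, using the transitivity in Citation~\ref{cit:BS_trans}.

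I then claim that $H$ is generated by $K\cap H$ (or rather a finitely generated piece of $B'$ containing $B[a,b]'$), the element $h$, and finitely many coset representatives of $H/B'$. The reason is that any $g\in B'$ has support in some $[c,d]$ in the interior, and conjugating by a power $h^n$ moves $[c,d]$ into a fixed compact window. Since $h$ has no fixed points near the ends, I would take the window to be a fundamental-domain union, and then use Lemma~\ref{lem:gen} to split $[c,d]$ into pieces each conjugable into $[a,b]$.

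The main obstacle is making this last step precise. An element $h$ attracting toward $q$ near $p$ and $q$ may still have interior fixed points. So rather than relying on $h$ alone to compress arbitrary supports, I would use Lemma~\ref{lem:moving} together with the finite generation of $B$ to show that $B'$ is generated by finitely many $B'$-conjugates of $B[a,b]'$ together with $\langle h\rangle$-conjugates. This reduces finite generation of $H$ to finite generation of $B[a,b]'$ modulo $h$-conjugation.

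The cleanest route I would try first is the standard one: $B'$ is finitely generated relative to elements with non-trivial germs at both ends, as in the proof that $F'$ is not finitely generated but $F$ is. Concretely, I would write the given finite generating set of $B$ as elements of $H$ times elements of $B'$ supported near the endpoints. Then I would absorb the endpoint-supported parts by conjugating with powers of $e$ and $f$, which shrink those supports into a fixed compact region; for that region I would use the finite generation of $B[a,b]'$ within $B$.
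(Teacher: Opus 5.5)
Your first direction (finite generation forces non-trivial germs at both endpoints) is correct. It is a nice elementary replacement for that half of the paper's argument.

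The gap is in the converse, which is the real content of the lemma. After compressing supports with powers of elements of $H$ and splitting with Lemma~\ref{lem:gen}, everything reduces to one claim: for some fixed compact window $W\subset\mathrm{int}(I)$, the group $B[W]'$ lies in a \emph{finitely generated subgroup of $H$}. You never prove this.
\begin{itemize}
\item $B[W]'$ is itself not finitely generated, by your own first argument.
\item A finitely generated $K\le B$ containing $B[W]$ is vacuous (take $K=B$), and $K\cap H$ need not be finitely generated.
\item The natural candidate $B[W]\cap H$ lies between $B[W]'$ and $B[W]$. It is finitely generated precisely by the lemma applied to $F(W;A,P)$, so this route is circular. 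The paper uses the lemma for exactly this purpose in the FG-filtered part of Proposition~\ref{prop:BS_groups_good}.
\end{itemize}
The paper gets this finiteness input from Bieri--Neumann--Strebel. The only characters of $F(I;A,P)$ that can lie outside $\Sigma^1$ are the two endpoint-slope characters, up to positive scaling. By their Theorem~B1, a normal subgroup with abelian quotient is finitely generated if and only if every non-zero character vanishing on it lies in $\Sigma^1$.

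The missing step looked free because of your opening claim, which is false in general: $F(I;A,P)'$ need not be the subgroup of elements with trivial germs at both endpoints. For the Brown--Higman--Thompson group $F_n$ with $n\ge 3$, the abelianization is $\Z^n$, while the endpoint germs only account for a $\Z^2$ quotient. Two consequences follow.
\begin{itemize}
\item $B[W]\not\le B'$, let alone $B[W]\le H$, so you cannot simply use the finitely generated group $B[W]$.
\item An element of $B'$ supported in $[c,d]$ need not lie in $B[c,d]'$, which is what Lemma~\ref{lem:gen} requires. You would have to prove $B'=\bigcup B[c,d]'$ separately.
\end{itemize}
When the identification does hold (e.g.\ for $F$ or $F_{2,3}$) and $B[W]$ is finitely generated, your compression argument can be completed.

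There are three smaller problems.
\begin{itemize}
\item An element $h\in H$ pushing toward $q$ near both ends need not exist. In $F$, the subgroup $H=\{g\mid g'(0)=g'(1)\}$ contains $F'$, has non-trivial germs at both ends, and is finitely generated. Yet each of its elements with non-trivial germs repels from both endpoints or attracts to both.
\item Such an $h$ would not, on its own, compress supports at both ends. You need an element repelling at both endpoints, or a separate treatment of each end.
\item In your ``cleanest route,'' a generator of $B$ cannot be written as an element of $H$ times an element of $B'$ unless it already lies in $H$, since $B'\le H$.
\end{itemize}
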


\begin{proof}
Say $I=[a,b]$. Let $F(I;A,P)' \le H\le F(I;A,P)$. Consider the BNS-invariant $\Sigma^1(F(I;A,P))$ (see \cite{bieri87}). When $I=[0,1]$ this was explicitly computed in \cite{bieri87}, and in general it is easy to check that the conditions in \cite[Theorem~1.1]{strebel} hold for $F(I;A,P)$; we see that all characters $\chi\colon F(I;A,P)\to\R$ satisfying $\chi(H)=0$ lie in $\Sigma^1(F(I;A,P))$ if and only if $H$ has non-trivial groups of germs at both endpoints of $I$. Now \cite[Theorem~B1]{bieri87} says $H$ is finitely generated if and only if it has non-trivial groups of germs at both endpoints of $I$.
\end{proof}

\begin{proposition}\label{prop:BS_groups_good}
Let $F(I;A,P)$ be an arbitrary Bieri--Strebel group for $I$ compact. Let $\theta\colon \mathrm{int}(I)\to\R$ be a homeomorphism, and let $E\coloneqq (F(I;A,P))\theta \le \Homeo_+(\R)$. The following hold.
\begin{enumerate}
    \item The group $E$ is robust.
    \item Assume $E[\ell,r]$ is finitely generated for some $\ell<r$ in $(A)\theta$ with $\ell$ and $r$ in the same $E$-orbit. Then $E$ is FG-filtered.
\end{enumerate}
\end{proposition}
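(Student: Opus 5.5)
Since $\theta$ is a homeomorphism conjugating $F(I;A,P)$ to $E$, I will work directly with $F\coloneqq F(I;A,P)$ acting on $\mathrm{int}(I)=(a_0,b_0)$ and transport everything at the end.

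For (1), the properties are checked in order.

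\emph{Locally moving.} Given $x<y$ in $\mathrm{int}(I)$, I pick points of $A$ inside $(x,y)$; these exist since $A$ is a nonzero $\Z[P]$-module and hence dense (in the degenerate cases this needs a comment). A piecewise-linear bump supported in $[x,y]$ with slopes in $P$ and breakpoints in $A$ then moves any chosen point of $(x,y)$. Citation~\ref{cit:BS_trans} gives this flexibility: it supplies elements of $F$ mapping prescribed points of an orbit in $A$ to prescribed points. Restricting such an element to a subinterval with endpoints in the same orbit, and extending by the identity, gives an element of $F[x,y]$ moving the given point.

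\emph{Abelian germs.} At any $x\in\mathrm{int}(I)$, the germ of an element fixing $x$ is determined by its pair of one-sided slopes in $P\times P$, since the map is linear on each side near $x$. This embeds the group of germs into the abelian group $P\times P$.

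\emph{$E'$ is the union of the $E[a,b]'$.} At the endpoints of $I$ the germ is determined by the slope, so $F\to P\times P$ (the slopes at the two endpoints) is a homomorphism with abelian image. Hence every $f\in F'$ has slope $1$ near both endpoints, and so $\Supp(f)\subseteq[c,d]$ for some $c<d$ in the interior. To see $f\in F[c',d']'$ for slightly larger $c',d'$, I write $f$ as a product of commutators $[g_i,h_i]$. Choosing $c'<d'$ with $A$-endpoints in one orbit and close to $a_0,b_0$, I will replace each $g_i,h_i$ by elements of $F[c',d']$ that agree with them on a large compact set containing every intermediate image of $\Supp(f)$ under the partial products. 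For this, Citation~\ref{cit:BS_trans} gives elements that agree near the endpoints and are linear there, which can then be cut off.

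\emph{Closure-of-support condition.} If $f\in F'$ has $\overline{\Supp(f)}\subseteq(a,b)$, the same truncation argument run inside $F[a,b]$ shows $f\in F[a,b]'$. The key input is that $F[a,b]$ restricted to $[a,b]$ is again a Bieri--Strebel-type group (when $a,b\in A$), whose commutator subgroup consists exactly of the elements trivial near the endpoints. For general $a,b$, I will pass to slightly smaller $a',b'\in A$ with $\overline{\Supp(f)}\subseteq(a',b')$.

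I expect the main obstacle to be this characterization of $F[a,b]'$ as precisely the elements with trivial germs at both endpoints. My first attempt would be to quote the known description of the abelianization of $F(I;A,P)$ from \cite{bieri16}. Failing that, I would prove it by hand, using Lemma~\ref{lem:LoMAG}-style conjugation tricks, or Lemma~\ref{lem:gen} once local moving and abelian germs are established.

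For (2), let $\ell<r$ be in $(A)\theta$ and in the same $E$-orbit, with $E[\ell,r]$ finitely generated. Given $a<b$, I will find $\ell'<a<b<r'$ in the orbit of $\ell$, and by Citation~\ref{cit:BS_trans} an element of $E$ carrying $\ell,r$ to $\ell',r'$. So $E[\ell',r']$ is also finitely generated, being conjugate to $E[\ell,r]$, and it is a Bieri--Strebel group on a compact interval. Its subgroup $H$ of elements that are, near the endpoints, powers of one fixed germ, namely the germs of some $g\in E[\ell',r']$, contains $E[\ell',r']'$ and has non-trivial germs at both ends. Since $E[\ell',r']/E[\ell',r']'$ is finitely generated abelian, I can instead take $H=E[\ell',r']'\langle g\rangle$. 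By Lemma~\ref{lem:coabelian}, $H$ is finitely generated. I then choose intermediate $\ell'<\ell''<a<b<r''<r'$ and conjugate $H$ by an element pushing $[\ell',r']$ into $[\ell'',r'']$. Finally, $E[a,b]'$ sits inside a finitely generated subgroup of $E[\ell',r']'$: I take the commutators of finitely many generators of the conjugated $H$ with suitable elements, which by robustness lie in $E[\ell',r']'$. The remark after Definition~\ref{def:FG_filtered} means it suffices to do this for a single pair $a<b$.
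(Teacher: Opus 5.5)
\textbf{Part (1) has a genuine gap.} Your arguments for local moving and abelian germs are fine. The other two robustness conditions, however, rest on a claim that is false. A Bieri--Strebel group on a compact interval need not have commutator subgroup equal to the subgroup of elements with trivial germs at both endpoints, so no citation can supply it and a hand proof will fail. For example, in $F_3=F([0,1];\Z[\frac{1}{3}],\langle 3\rangle)$ every element satisfies $(x)f-x\in 2\Z[\frac{1}{3}]$ for $x\in\Z[\frac{1}{3}]$. Hence, by the chain rule, summing $\log_3$ of the slope jumps of $f$ over breakpoints $k/3^m$ with $k$ even defines a homomorphism $F_3\to\Z$. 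It takes the value $-2$ on the element with breakpoints $\frac{1}{9},\frac{2}{9},\frac{5}{9}$ and slopes $1,3,\frac{1}{3},1$. That element is trivial near both endpoints but is not in $F_3'$ (consistent with $F_3^{ab}\cong\Z^3$).

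This is exactly what your truncation step needs. Replacing $g_i,h_i$ by cut-offs in $F[c',d']$ that agree with them on a large compact set only forces $\prod[\tilde g_i,\tilde h_i]$ to agree with $f$ on $\Supp(f)$. Near $c'$ and $d'$ the cut-offs need not commute, even though the germs of $g_i,h_i$ at the ends of $I$ do. So the product differs from $f$ by a compactly supported error term, and you cannot show this error lies in the commutator subgroup.

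The paper's proof of (1) avoids abelianizations entirely by conjugating instead of truncating. Given $f\in E[a,b]'$ with $\overline{\Supp(f)}\subseteq(c,d)$, first move $a,b,c,d$ into a common orbit $O\subseteq(A)\theta$. Then choose $x<y$ in $O$ with $c<x<y<d$ and $\Supp(f)\subseteq(x,y)$. Citation~\ref{cit:BS_trans} gives $h\in E$ carrying $a,x,y,b$ to $c,x,y,d$ and commuting with $f$ (e.g.\ acting trivially on $[x,y]$). Conjugation by $h$ carries $E[a,b]'$ onto $E[c,d]'$ and fixes $f$, so $f\in E[c,d]'$.

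\textbf{Part (2) has a related gap.} Conjugating $H=E[\ell',r']'\langle g\rangle$ into $E[\ell'',r'']$ gives a finitely generated group containing $E[a,b]'$. But it also contains the conjugate of $g$, which is compactly supported in $(\ell',r')$ and, by the example above, need not lie in $E[\ell',r']'$. So this group is in general not a subgroup of $E[\ell',r']'$. Your remedy, passing to commutators of its generators with ``suitable elements,'' is not justified. Those commutators do lie in $E[\ell',r']'$ (robustness is not even needed for that), but nothing shows that the subgroup they generate still contains $E[a,b]'$.

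The paper instead applies Lemma~\ref{lem:coabelian} directly to $K=E[\ell,r]'\cap E[a,b]$. Here $a<b$ lie in $(\ell,r)$, and by the remark after Definition~\ref{def:FG_filtered} they may be taken in the orbit of $\ell$, so that $E[a,b]$ is conjugate to $E[\ell,r]$ and hence finitely generated. Then $E[a,b]'\le K\le E[a,b]$ and $K\le E[\ell,r]'$ automatically. It remains to see that $K$ has non-trivial germs at $a$ and $b$. Take $u\in E[a,b]$ with non-trivial germ at $a$ and support in a short interval $[a,m]$. Multiply $u$ by a conjugate of $u^{-1}$ under an element of $E[\ell,r]$ that moves $[a,m]$ into $(a,b)$. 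The product lies in $K$ and has the germ of $u$ at $a$; the same construction works at $b$. So $K$ is finitely generated.
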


\begin{proof}
It is standard and easy to see that $E$ is locally moving with abelian germs. Now let $f\in E[a,b]'$ for $a<b$ in $\R$, suppose the closure of support of $f$ is in $(c,d)$ for some $a<c<d<b$, and we need to show that $f\in E[c,d]'$. Up to making $[a,b]$ larger and $[c,d]$ smaller we can assume $a,b,c,d$ all share an $E$-orbit $O$. Since the closure of support of $f$ lies in $(c,d)$, we can choose $c<x<y<d$ in $O$ with $\Supp(f)\subseteq (x,y)$. By Citation~\ref{cit:BS_trans} (filtered through $\theta$), there exists $h\in E$ taking the four points $a<x<y<b$ to the four points $c<x<y<d$, so $h$ conjugates $E[a,b]'$ to $E[c,d]'$ and fixes $f$. Now any expression writing $f$ as a product of commutators in $E[a,b]$ conjugates to an expression writing $f$ as a product of commutators in $E[c,d]$, so $f\in E[c,d]'$ as desired.

Now assume $E[\ell,r]$ is finitely generated for some $\ell<r$ in $(A)\theta$ with $\ell$ and $r$ in the same $E$-orbit. To see that $E$ is FG-filtered, it suffices to show that for $a<b$ with $\ell<a<b<r$ the group $E[a,b]'$ is contained in a finitely generated subgroup of $E[\ell,r]'$. Note that $E[a,b]'\le E[\ell,r]'\cap E[a,b]$, and we claim that $E[\ell,r]'\cap E[a,b]$ is finitely generated. By Lemma~\ref{lem:comm_move} $E[a,b]'$ has no global fixed points in $(a,b)$, so neither does $E[\ell,r]'\cap E[a,b]$, and hence $E[\ell,r]'\cap E[a,b]$ has non-trivial groups of germs at $a$ and $b$. Since $E[a,b]' \le E[\ell,r]'\cap E[a,b] \le E[a,b]$, by Lemma~\ref{lem:coabelian} (filtered through $\theta$) we conclude that $E[\ell,r]'\cap E[a,b]$ is finitely generated and we are done.
\end{proof}

Now we can prove the main results of this section.

\begin{corollary}\label{cor:F_n_semiconj}
For all $n\ge 2$, every copy of $F_n$ in $V$ is semiconjugate to the standard $F_n$ acting on $S^1$.
\end{corollary}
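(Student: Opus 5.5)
The plan is to apply Theorem~\ref{thrm:main}, or more precisely Corollary~\ref{cor:semiconj}. We realize $F_n$ as $F([0,1];\Z[\frac1n],\langle n\rangle)$ and set $E\coloneqq(F_n)\theta\le\Homeo_+(\R)$ for a homeomorphism $\theta\colon(0,1)\to\R$. An embedding of $F_n$ into $V$ is then an embedding $\iota\colon E\to V$. It therefore suffices to check that $E$ is finitely generated, locally moving with abelian germs, and FG-filtered.

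Finite generation of $F_n$ is recorded in Example~\ref{ex:BHT_BST}. Locally moving with abelian germs is part of robustness, which Proposition~\ref{prop:BS_groups_good}(1) gives for any Bieri--Strebel group with compact $I$. For FG-filtered we invoke Proposition~\ref{prop:BS_groups_good}(2). This requires exhibiting $\ell<r$ in $(A)\theta$, lying in the same $E$-orbit, with $E[\ell,r]$ finitely generated.

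For this, we take two points $x<y$ of $\Z[\frac1n]\cap(0,1)$ lying in a common $F_n$-orbit and put $\ell=(x)\theta$, $r=(y)\theta$. Note that $\Z[\frac1n]\cap(0,1)$ need not be a single orbit when $n>2$, because of the usual $(n-1)$-divisibility invariant. We avoid this by choosing $x,y$ in one orbit $O$. Citation~\ref{cit:BS_trans} says $O$ is dense, so such points exist.

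The group $E[\ell,r]$ is then isomorphic to $F([x,y];\Z[\frac1n],\langle n\rangle)$. The main point to verify is that this group is finitely generated. The plan is to use the standard fact that $F([x,y];\Z[\frac1n],\langle n\rangle)$ is isomorphic to some $F_{n,r}$ (the Brown--Higman--Thompson group on an interval of suitable length), which is finitely generated by \cite{brown87,stein92}.

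As a fallback, we can argue as in Lemma~\ref{lem:coabelian}. Choose $h\in F_n$ with $x.h$ and $y.h$ close to $0$ and $1$. Then $F_n[x,y]$ is a conjugate of a subgroup that is finitely generated by the BNS argument, since it has nontrivial germs at both endpoints and contains the commutator subgroup of the corresponding interval group. Once finite generation is established, Corollary~\ref{cor:semiconj} yields the semiconjugacy $\phi\colon\Cantor\to S^1$ for every copy of $F_n$ in $V$.
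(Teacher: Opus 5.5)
Your proposal is correct and follows the paper's route: you verify the hypotheses of Theorem~\ref{thrm:main} (via Corollary~\ref{cor:semiconj}) using Proposition~\ref{prop:BS_groups_good}. The paper cites $F_n[a,b]\cong F_n$ for finite generation of $E[\ell,r]$, whereas you get it from the affine rescaling $F([x,y];\Z[\frac1n],\langle n\rangle)\cong F_{n,r}$, which is equally valid. Your explicit choice of $x,y$ in a single orbit fills in a detail the paper leaves implicit. Your fallback would not work as written: no conjugate of $F_n[x,y]$ lies between $F_n'$ and $F_n$, and applying Lemma~\ref{lem:coabelian} with $I=[x,y]$ presupposes the finite generation you are trying to prove. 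It is not needed, though.
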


\begin{proof}
Note that for all $a<b$ in $[0,1]\cap \Z[\frac{1}{n}]$, $F_n[a,b]\cong F_n$ \cite[Theorem~E16.7]{bieri16}, so this is finitely generated, and hence the condition in Proposition~\ref{prop:BS_groups_good}(ii) is met. Now $F_n$ is locally moving with abelian germs and FG-filtered by Proposition~\ref{prop:BS_groups_good}, so the result follows from Theorem~\ref{thrm:main}.
\end{proof}

The next result in particular proves Corollary~\ref{cor:stein_no_V}.

\begin{corollary}\label{cor:BST_no_V}
Let $F(I;A,P)$ be a finitely generated Bieri--Strebel group satisfying the conditions of Proposition~\ref{prop:BS_groups_good}. If $P$ is non-cyclic then $F(I;A,P)$ does not embed in $V$. In particular the Stein group $F_{2,3}$ does not embed in $V$.
\end{corollary}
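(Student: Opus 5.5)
The plan is to apply Theorem~\ref{thrm:non_embed} to $E\coloneqq (F(I;A,P))\theta$, with $\theta\colon \mathrm{int}(I)\to\R$ a homeomorphism sending the right endpoint of $I$ to $\infty$. The hypotheses are that $F(I;A,P)$ is finitely generated and satisfies the conditions of Proposition~\ref{prop:BS_groups_good}, by which I mean the condition in part~(ii). Then Proposition~\ref{prop:BS_groups_good} gives that $E$ is robust and FG-filtered. Elements of $F(I;A,P)$ are piecewise linear with finitely many pieces, so near the right endpoint each one is either the identity on a final segment or has no fixed points on some final segment. This property survives conjugation by $\theta$, so $E$ is tame at $\infty$. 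Theorem~\ref{thrm:non_embed} then says that if $E$ embeds in $V$, every finitely generated subgroup of the group of germs of $E$ at $\infty$ is cyclic.

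It remains to find a non-cyclic finitely generated subgroup of the germ group at $\infty$ when $P$ is non-cyclic. I would use the slope at the right endpoint. The map $f\mapsto$ (left derivative of $f$ at the right endpoint of $I$) is a homomorphism to $P$, and it factors through the germ group at the right endpoint, which corresponds to $\infty$. Since $P$ is non-cyclic and torsion free, it contains two elements $p,q$ generating a non-cyclic subgroup. If elements $f,g\in F(I;A,P)$ with right-endpoint slopes $p$ and $q$ exist, then the germs of $f$ and $g$ generate a finitely generated subgroup whose image in $P$ is $\langle p,q\rangle$. That subgroup is therefore non-cyclic, which gives the contradiction.

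The main obstacle is showing that every element of $P$ occurs as a right-endpoint slope. The approach I would try first uses the transitivity in Citation~\ref{cit:BS_trans} together with the standard Bieri--Strebel fact that one can build a PL homeomorphism with slope $p$ near the right endpoint $b$: take the linear map $x\mapsto b-p(b-x)$ near $b$ and interpolate to the identity using breakpoints in $A$. Such breakpoints exist because $A$ is a $\Z[P]$-module, so $(1-p)b$ and similar quantities lie in $A$ whenever $b\in A$ (and the endpoint of $I$ can be assumed to lie in $A$ in the cases of interest). If this general construction gets too technical, I would fall back on the explicit case $F_{2,3}$. There $I=[0,1]$, $A=\Z[1/6]$ and $P=\langle 2,3\rangle\cong\Z^2$, and elements with slopes $2$ and $3$ at $1$ are easy to write down explicitly, so the germ group at $\infty$ surjects onto $\Z^2$, as the introduction already notes. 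For the Stein group, finite generation (Example~\ref{ex:BHT_BST}) and the hypothesis of Proposition~\ref{prop:BS_groups_good}(ii) are also needed. The latter holds with $\ell,r\in\Z[1/6]$, since $F_{2,3}[\ell,r]$ is again isomorphic to a finitely generated Stein-type group by the analogue of \cite[Theorem~E16.7]{bieri16}. This gives Corollary~\ref{cor:stein_no_V}.
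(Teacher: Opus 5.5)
Your route is the paper's: conjugate by $\theta$, get robustness and FG-filtering from Proposition~\ref{prop:BS_groups_good}, get tameness at $\infty$ from piecewise linearity, identify germs at $\infty$ with right-endpoint slopes, and apply Theorem~\ref{thrm:non_embed}. However, one step rests on a false claim. A non-cyclic torsion-free abelian group need not contain two elements generating a non-cyclic subgroup. Locally cyclic groups such as $\Z[\frac{1}{2}]$ or $\Q$ are counterexamples, realized for instance by $P=\{2^{m/2^k}\mid m\in\Z,\ k\ge 0\}$. Theorem~\ref{thrm:non_embed} only constrains \emph{finitely generated} subgroups of the germ group, so for such $P$ your elements $p,q$ do not exist and no contradiction follows.

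The fix uses the hypothesis you do not invoke at this point: $F(I;A,P)$ is finitely generated. Its group of germs at $\infty$ is a quotient of $E$, hence finitely generated, and Theorem~\ref{thrm:non_embed} applied to that group itself says it is cyclic. It maps onto $P$ via the right-endpoint slope, provided every slope is realized; that is where your assumption that the endpoint lies in $A$ enters, whereas the paper just asserts the germ group is isomorphic to $P$. So $P$ would be cyclic, a contradiction. Equivalently, $P$ is then a finitely generated torsion-free abelian group, so $P\cong\Z^k$ with $k\ge 2$, and your two-element argument goes through. This is how the paper's one-line ``isomorphic to $P$, hence not cyclic'' is meant to be read.

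For $F_{2,3}$, you can avoid the unspecified analogue of \cite[Theorem~E16.7]{bieri16} by taking $\ell=1/4$, $r=3/4$. Conjugating by $x\mapsto \frac{x}{2}+\frac{1}{4}$ gives $F_{2,3}[1/4,3/4]\cong F_{2,3}$. You should also check that $1/4$ and $3/4$ lie in one orbit, which holds already under $F\le F_{2,3}$; this is part of the hypothesis of Proposition~\ref{prop:BS_groups_good}(ii) that you did not verify.
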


\begin{proof}
Fix a homeomorphism $\theta\colon \mathrm{int}(I)\to\R$ and let $E=(F(I;A,P))\theta\le\Homeo_+(\R)$. By Proposition~\ref{prop:BS_groups_good}, $E$ is robust and FG-filtered. It is also tame at $\infty$ since $F(I;A,P)$ is piecewise-linear and $I$ is compact. The group of germs of $E$ at $\infty$ is isomorphic to $P$, hence is not cyclic, so by Theorem~\ref{thrm:non_embed} $E$ does not embed in $V$. The $F_{2,3}$ result is immediate since $\langle 2,3\rangle_{\Q^\times}$ is not cyclic, and for example $F_{2,3}[1/4,3/4]\cong F_{2,3}$ is finitely generated (with the isomorphism given by conjugating by $x\mapsto \frac{x}{2} + \frac{1}{4}$).
\end{proof}

\begin{remark}
There is an interesting subgroup of $F_{2,3}$, denoted $F_{\frac{3}{2}}$, defined to be the subgroup in which every slope $2^m 3^n$ satisfies $m+n=0$, i.e., is a power of $\frac{3}{2}$. It was recently shown by Burillo and Felipe \cite{burillo24} that $F_{\frac{3}{2}}$ is finitely generated, and it remains open whether it is finitely presented. We do not know whether $F_{\frac{3}{2}}$ embeds in $V$. Another interesting Bieri--Strebel group is Cleary's irrational slope Thompson group $F_\tau=F([0,1];\Z[\tau],\langle\tau\rangle)$, where $\tau$ is the solution to $\tau^2=\tau+1$ with $\tau>1$. Since $\langle \tau\rangle$ is cyclic, we cannot rule out $F_\tau$ embedding in $V$ using our methods, but $F_\tau$ does not embed in $F$ \cite[Corollary~1]{hyde23}, and we expect it should also not embed in $V$.
\end{remark}

\medskip

Given our semiconjugacy result about all copies of $F$ in $V$, it is worth mentioning that there exist copies of $F$ in $V$ that are not conjugate in $V$ to any subgroup of the standard copy of $F$. The key is to apply a sufficiently complicated automorphism of $V$. To do this, we can use a sufficiently complicated transducer with certain nice properties. Let us illustrate this with an explicit example, shown to us by Jim Belk, Collin Bleak, and Martyn Quick.

\begin{figure}[htb]
\begin{center}
\begin{tikzpicture}[shorten >=0.5pt,node distance=3cm,on grid,auto,semithick,every state/.style={fill=blue!25!white,text=black}] 
   \node[state] (q_0)   {$q_0$}; 
   \node[state] (q_1) [below left=of q_0] {$q_1$}; 
   \node[state] (q_2) [below right=of q_0] {$q_2$}; 
    \path[->] 
    (q_0) edge [loop above] node {$0/10$} ()
          edge  [out=210,in=60]node {$1/\varnothing$} (q_1)
    (q_1) edge[bend left]  node  {$0/0$} (q_0)
          edge  [out=340,in=200]node {$1/11$} (q_2)
    (q_2) edge[bend right]  node [swap] {$0/0$} (q_0)
          edge [loop right=of q_2] node {$1/1$} ();
\end{tikzpicture}
\end{center}
\caption{The transducer representing a homeomorphism $h$ of $\Cantor$ that normalizes $V$.}
\label{fig:h-transducer}
\end{figure}

\begin{example}\label{ex:crazy_F}
Consider the transducer defined in Figure~\ref{fig:h-transducer} (with initial state $q_0$), and let $h\colon\Cantor\to\Cantor$ be the resulting homeomorphism. More precisely, for $\kappa\in\Cantor$, the diagram describes an algorithm to compute $(\kappa)h$. Say $\kappa= \epsilon_1 \epsilon_2 \epsilon_3 \cdots$ ($\epsilon_i\in \{0,1\}$). We start in state $q_0$, and feed $\kappa$ into the transducer one letter at a time. If at some stage the active state is $q$ and the next input letter is $\epsilon_i$, then we look for the transition starting at $q$ with a label of the form $\epsilon_i/\eta_i$ (where $\eta_i \in \{0,1\}^*$). We then follow that transition (updating to a new active state), while replacing $\epsilon_i$ with $\eta_i$, and get $(\kappa)h = \eta_1 \eta_2 \eta_3 \cdots$.

It was proved in \cite{bleak24} that conjugation by the homeomorphism defined by a bi-synchronizing transducer gives an automorphism of $V$, and in fact every automorphism arises as such \cite{bleak24}[Theorem~1.1]. Here \emph{bi-synchronizing} means there is some $n$ such that for both the transducer and one representing the inverse of the homeomorphism, if one inputs a word of length $n$, the resulting active state is independent of the initial state. It is straightforward to check that in our transducer, the active state after inputting a word of length $2$ is independent of the initial state, and $h$ has order $2$, so it is bi-synchronizing. Thus conjugation by $h$ is an element of $\Aut{V}$.

Suppose that $F^h$ is conjugate in $V$ to a subgroup of $F$, say $F^{hg} \le F$ for some $g \in V$. Choose $w_1,\dots,w_k$ and $u_1,\dots,u_k$ in $\{0,1\}^*$ ($k\ge 2$) such that $g$ sends $w_i\kappa$ to $u_i\kappa$ for all $i=1,\dots,k$ and $\kappa\in\Cantor$. For $z\in\{0,1\}^*$ we compute that $h$ maps $z00\Cantor$ to $z'010\Cantor$ for some $z'\in\{0,1\}^*$. Fix such $z$ and $z'$, and choose $z$ sufficiently long that $z'$ is long enough to contain one of the $w_j$ as a prefix. Let $x_0$ be the standard generator of $F$ that replaces the prefixes $0,10,11$ with $00,01,1$ respectively, and let $a$ be the element of $F$ that is the ``deferment'' of $x_0$ to $z00\Cantor$, that is, $a$ sends $z00\kappa$ to $z00((\kappa)x_0)$ and fixes all points outside $z00\Cantor$. Note that the support of $a$ equals $z00\Cantor$, so the support of $a^h$ equals $z'010\Cantor$. Now observe that $a^{hg}$ is an element of $F$, so in particular it is order preserving on its support (in the usual lexicographic order on $\Cantor$ coming from $0<1$). Since $z'$ has $w_j$ as a prefix, this implies that $a^h$ is order preserving on its support. In particular $a^h$ fixes the minimum element $z'010\overline{0}$ of $z'010\Cantor$, so $a$ fixes $(z'010\overline{0})h^{-1}$, which we compute equals $z00\overline{10}$. But $a$ does not in fact fix this, since $(z00\overline{10})a = z0001\overline{10}$, so we have reached our desired contradiction.
\end{example}

\begin{remark}
We have focused on piecewise-linear groups, and a natural question is what can be said about piecewise-projective groups, as in \cite{monod13}. For example, regarding the Lodha--Moore groups from \cite{lodha16}, one could ask whether all copies of the $F$-like Lodha--Moore group $LM_F$ inside the $V$-like one $LM_V$ are semiconjugate to the standard copy of $LM_F$. One could also define Stein-like versions of $LM_F$, and ask whether they embed in $LM_V$. We will not pursue this here, but we should note that $LM_F$ does not embed in $V$ \cite[Remark~2.9]{zaremsky16}, so the relevant questions should involve embedding piecewise-projective groups into other piecewise-projective groups, not into piecewise-linear groups.
\end{remark}

\bibliographystyle{alpha}
\newcommand{\etalchar}[1]{$^{#1}$}

\end{document}